\newtheorem{thm}{Theorem}[section]
\newtheorem{lem}[thm]{Lemma}
\newtheorem{pro}[thm]{Proposition}
\theoremstyle{definition}
\newtheorem{example}[thm]{Example}
\theoremstyle{remark}
\numberwithin{equation}{section}
\begin{document}

\title{Some homological properties of $T$-Lau product algebra}
\author{ N. RAZI AND A. POURABBAS}
\address{Faculty of Mathematics and Computer
Science, Amirkabir University of Technology, 424 Hafez Avenue,
Tehran 15914, Iran}
\email{Razina@aut.ac.ir    }
 \email{arpabbas@aut.ac.ir}

\subjclass[2010]{Primary: 46M10. Secondary: 46H25, 46M18.}

\keywords{approximately amenable, pseudo amenable, $\phi$-pseudo amenable, $\phi$-biflat, $\phi$-biprojective, 
double centralizer algebra}

\begin{abstract}
Let $T$ be a homomorphism from a Banach algebra $B$ to a Banach algebra $A$. The Cartesian product space $A\times B$ with $T$-Lau multiplication and $\ell^1$-norm
becomes a new Banach algebra  $A\times _T B$.  
We investigate the notions such as approximate amenability, pseudo amenability, $\phi$-pseudo amenability, $\phi$-biflatness
and $\phi$-biprojectivity for Banach algebra $A\times_T B$.
We also present an example to show that  approximate amenability of 
$A$ and $B$ is not stable for $A\times _TB$.
Finally we 
characterize  the double centralizer algebra of $A\times _T B$ and present an application of this characterization.
\end{abstract}

\maketitle
\section{Introduction and Preliminaries}
Let $A$ and $B$ be Banach algebras and let $T: B\rightarrow A$ be an algebra homomorphism.
Then we consider $A\times B$ with the following product
$$(a,b)\times _ T (c,d)=(ac+T(b)c+aT(d),bd)\hspace*{2cm} ((a,b),(c,d)\in A\times B).$$
The Cartesian product space $A\times B$ with this product is denoted by $A\times _T B$.
Let $A$ and $B$ be Banach algebras and let $\Vert T\Vert\leq 1$. Then we consider $A\times _T B$
with the following norm
$$\Vert (a,b)\Vert=\Vert a\Vert +\Vert b\Vert\hspace*{2cm} ((a,b)\in A\times _T B).$$
We note that $A\times _T B$ is a Banach algebra with this norm.

Suppose that $T:B\rightarrow A$
is an algebra homomorphism with $\Vert T\Vert\leq 1$ and $A$ is a commutative Banach algebra. Then Bhatt and Dabshi \cite{BHA} have studied the properties,  
such as Gelfand space, Arens regularity and amenability of $A\times _T B$. Moreover suppose that $A$ is unital with unit element
$e$ and $\psi_0:B\rightarrow \mathbb{C}$ is a multiplicative 
 linear functional on $B$. If we define $T:B\rightarrow A$ by $T(b)=\psi_0(b)e$, then the product $\times_T$ coincides with the Lau product \cite{LAU}. The group algebra $L^1(G)$, the measure algebra $M(G)$,
the Fourier algebra $A(G)$ of a locally compact group $G$ and the Fourier-Stieltjes algebra
of a topological group are the examples of Lau algebra \cite{LAU}. Lau product was extended by Sangani Monfared for the general case \cite{MON}.  Many basic properties of $A\times _\theta B$ such as existence of a  bounded approximate identity, spectrum, topological center, the ideal structure, biflatness and biprojectivity are investigated in \cite{MON} and \cite{VISH} . 

Following \cite{BHA}, Abtahi  {\it et al.} \cite{ABT} for every Banach algebras $A$ and $B$ defined the Banach algebra $A\times _T B$ equipped with algebra multiplication
$$(a,b)\times _ T (c,d)=(ac+T(b)c+aT(d),bd)\hspace*{2cm} ((a,b),(c,d)\in A\times B)$$
and the norm $\Vert (a,b)\Vert=\Vert a\Vert + \Vert b\Vert$. They studied the homological properties of this Banach algebra such as biflatness,
biprojectivity and existence of a approximate identity.

We recall some basic definition of the homological properties.  
A Banach algebra $A$ is called  biprojective if $\pi_A : A\hat{\otimes}A\longrightarrow A$ has a bounded
right inverse which is an $A-$bimodule map. A Banach algebra $A$ is called biflat if the adjoint map
${\pi_A} ^{*} : A^*\longrightarrow (A\hat{\otimes}A)^*$ of $\pi_A$ has a bounded left inverse which is an $A-$bimodule map. Here 
the product morphism  $\pi_A :A\hat{\otimes}A\longrightarrow A$ for a Banach algebra $A$ is defined by $\pi_A(a\otimes b)=ab$.
It is clear that $\pi_A$ is an $A-$bimodule map.

 A Banach algebra $A$ is called approximately amenable if for every $A$-bimodule $X$, any derivation
$D: A\rightarrow X^*$ is approximately inner.
A Banach algebra $A$ has an approximate identity if there is a net $\lbrace\eta _{\alpha}\rbrace\subseteq A$
such that $\lim \eta _{\alpha}a-a=\lim a\eta _{\alpha}-a=0$ for all $a\in A$.
A Banach algebra $A$ has a weak approximate identity if there is a net $\lbrace\eta _{\alpha}\rbrace\subseteq A$
such that $\lim f(\eta _{\alpha}a -a)=\lim f(a\eta _{\alpha} -a)=0$ for all $a\in A$ and $f\in A^*$.

The notion of pseudo amenability of Banach algebras was introduced by Ghahramani and Zhang in \cite{GHAH}.
A Banach algebra $A$ is said to be pseudo amenable if there exists a net $\lbrace\rho _{\alpha}\rbrace\subseteq A\hat{\otimes}A$,
such that $a\rho _{\alpha}-\rho _{\alpha}a\rightarrow 0$ and $\pi_A (\rho _{\alpha})a\rightarrow a$
for all $a\in A$ \cite{GHAH}.

The notion of character pseudo amenability was introduced by Nasr-Isfahani and Nemati in \cite{NASR}.
Let $A$ be a Banach algebra and let $\phi\in \Delta(A)$, where $\Delta(A)$
is the character space of $A$. We say that $A$ is $\phi$-pseudo amenable if it has a (right)$\phi$-approximate
diagonal,  that is, there exists a not necessarily bounded net $(m_{\alpha})\subseteq A\hat{\otimes}A$ such that
$$\phi(\pi_A(m_{\alpha}))\rightarrow 1\quad
\hbox{and}
\quad\Vert a\cdot m_{\alpha}-\phi(a)m_{\alpha}\Vert\rightarrow 0$$
for all $a\in A$ \cite{NASR}.

 Let $A$ be a Banach algebra and let $\phi\in \Delta (A)$.
 Then $A$ is called $\phi$-biprojective if there exists a bounded $A$-bimodule morphism $\rho: A\rightarrow A\hat{\otimes}A$
 such that $\phi\circ\pi_A\circ\rho(a)=\phi(a)$ for every $a\in A$. 
 A Banach algebra $A$ is called $\phi$-biflat if there exists a bounded $A$-bimodule morphism 
 $\rho_A: A\rightarrow (A\hat{\otimes}A)^{**}$ such that $\tilde{\phi} \circ{\pi_A}^{**}\circ\rho(a)=\phi(a)$ for every $a\in A$, where
 $\tilde{\phi} $ is a unique extension of $\phi$ on $A^{**}$  defined by $\tilde{\phi} (F)=F(\phi)$ for every $F\in A^{**}$. It is 
 clear that this extension remains to be a character on $A^{**}$ \cite{saha}.

The purpose of this paper is
to determine some homological properties of $ A\times _T B$ for every Banach algebras $A$ and $B$, such as approximate amenability, 
pseudo amenability, $\varphi$-pseudo amenability, $\varphi$-biflatness
and $\varphi$-biprojectivity.
After all we characterize  the double centralizer algebra of $A\times _T B$ and we will give  an application of this characterization.
We also present an example which show that
$A$ and $B$ are approximately amenable but $A\times _TB$ is not approximately amenable.

Recall that $A^{*}$, the dual  of a Banach algebra $A$, is a Banach $A-$bimodule with
the module operations defined by $$\left\langle f\cdot a,b\right\rangle=\left\langle f,ab\right\rangle,\quad \left\langle a\cdot f,b\right\rangle=\left\langle f,ba\right\rangle, \qquad (f\in A ^*, a, b\in A).$$

Let $A$ be a Banach algebra. The second dual $A^{**}$ of $A$ with Arens products $\square$ and $\lozenge$ which are defined by 
$\left\langle m\square n,f\right\rangle=\left\langle m,n\cdot f\right\rangle$, where $\left\langle n\cdot f,a\right\rangle=\left\langle n,f\cdot a\right\rangle,$ and similarly
$\left\langle m\lozenge n,g\right\rangle=\left\langle n,f\cdot m\right\rangle$, where $ \left\langle f\cdot m,a\right\rangle=\left\langle m,a\cdot f\right\rangle$ for every $a\in A$, $f\in A^*$ and $m,n\in A^{**}$,
becomes a Banach algebra.

A Banach $A$-bimodule $X$ is called neo-unital if for every $x\in X$
there exist $a,a^{\prime}\in A$ and $y,y^{\prime}\in X$ such that
$ay=x=y^{\prime}a^{\prime}$.

The dual space $(A\times_{T} B)^*$ is identified with $A^* \times B^*$ via
$$\left\langle(f,g),(a,b)\right\rangle=f(a)+g(b),\quad (a\in A, b\in B, f\in A^*, g\in B^*)$$
for more details see\cite[Theorem 1.10.13]{MEG}.
Also the dual space  $(A\times _T B)^*$ is  $(A\times _T B)$-bimodule with the module operations defined by
$$(a,b)\cdot (f,g)=(a\cdot f+ T(b)\cdot f,(a\cdot f)\circ T+b\cdot g)$$
$$(f,g)\cdot (a,b)=(f\cdot a+f\cdot T(b),(f\cdot a)\circ T+g\cdot b).$$
Moreover $A\times_TB$ is a Banach $A$-bimodule under the module actions $a^{\prime}\cdot(a,b)=(a^{\prime},0)\times_T(a,b)$
and $(a,b)\cdot a^{\prime}=(a,b)\times_T(a^{\prime},0)$, for all $a,a^{\prime}\in A, b\in B$.
Similarly $A\times_TB$ is  a Banach $B$-bimodule.

\section{$\phi$-biflat, $\phi$-biprojective}
In this section we investigate $\phi$-biflatness and  $\phi$-biprojectivity of  Banach algebra $A\times_TB$ 

Recall that the character space of  $A\times_TB$ is determined by 
$$\lbrace (\phi,\phi\circ T): \phi\in \Delta(A)\rbrace\cup\lbrace(0,\psi):\psi\in \Delta(B)\rbrace.$$
see  \cite[Theorem 2.1]{BHA} for more details.
\begin{thm}
Let $A,B$ be Banach algebras and let $T: B\rightarrow A$ be an algebra homomorphism with $\Vert T\Vert\leq 1$. Then
\begin{enumerate}
\item[(i)]If $A\times_ T B$ is $(\phi,\phi\circ T)$-biflat for $\phi\in \bigtriangleup(A)$, 
 then $A$
is $\phi$-biflat.
\item[(ii)]If $A\times_ T B$ is $(0,\psi)$-biflat for $\psi\in \bigtriangleup(B)$, 
 then $B$
is $\psi$-biflat.
\end{enumerate}
\begin{proof}
(i) Since $A\times_ T B$ is $(\phi,\phi\circ T)$-biflat, there exists a $A\times_ T B$-bimodule morphism
$\rho_{A\times_ T B}:A\times_ T B\rightarrow ((A\times_ T B)\hat{\otimes} (A\times_ T B))^{**}$ such that
${\pi^{**}_{A\times _TB}}\circ\rho_{A\times_ T B}(a,b)(\phi,\phi\circ T)=\phi(a)+\phi\circ T(b)$ for every $(a,b)\in A\times_ T B.$ We 
define $\rho_A:A\rightarrow (A\hat{\otimes}A)^{**}$
by $\rho_A=(r_A{\otimes}r_A)^{**}\circ\rho_{A\times_ T B}\circ q_A$, 
where $r_A:A\times_ T B\rightarrow A$ is defined by $r_A((a,b))=a+T(b)$ for every $(a,b)\in A\times_ T B$ 
and $q_A:A\rightarrow A\times_ T B$ is defined by $q_A(a)=(a,0)$ for every $a\in A$. We prove that for every $a\in A$ 
\begin{equation}\label{e7}
\pi_A^{**}\circ\rho_A(a)(\phi)=\phi(a).
\end{equation}
To calculate the left hand side of (\ref{e7}) we have
\begin{equation}\label{e8}
\begin{split}
\pi_A^{**}\circ\rho_A(a)(\phi)
=&\pi_A^{**}\circ (r_A{\otimes}r_A)^{**}\circ\rho_{A\times_ T B}\circ q_A(a)(\phi)\\
=&\pi_A^{**}\circ (r_A{\otimes}r_A)^{**}\circ\rho_{A\times_ T B}(a,0)(\phi)\\
=&(r_A{\otimes}r_A)^{**}\circ\rho_{A\times_ T B}(a,0)(\pi_A^{*}(\phi))\\
=&(r_A{\otimes}r_A)^{**}\circ\rho_{A\times_ T B}(a,0)(\phi\circ\pi_A)\\
=&\rho_{A\times_ T B}(a,0)\circ(r_A{\otimes}r_A)^{*}(\phi\circ\pi_A).
\end{split}
\end{equation}
To calculate the left hand side of (\ref{e8}) for every $(x_1,y_1){\otimes}(x_2,y_2)\in (A\times_ T B)\hat{\otimes}(A\times_ T B)$, we have
\begin{equation}\label{e9}
\begin{split}
\rho_{A\times_ T B}(a,0)\circ(r_A{\otimes}r_A)^{*}(\phi\circ\pi_A)&((x_1,y_1)\otimes (x_2,y_2))\\
=&\rho_{A\times_ T B}(a,0)(\phi\circ\pi_A (r_A(x_1,y_1){\otimes}r_A(x_2,y_2))\\
=&\rho_{A\times_ T B}(a,0)(\phi\circ\pi_A(x_1+T(y_1)){\otimes}(x_2+T(y_2))\\
=&\rho_{A\times_ T B}(a,0)(\phi(x_1x_2)+\phi(x_1)\phi\circ T(y_2)\\ &\qquad+\phi\circ T(y_1)\phi(x_2)+\phi\circ T(y_1y_2)).
\end{split}
\end{equation}
But for every $(x_1,y_1){\otimes}(x_2,y_2)\in (A\times_ T B)\hat{\otimes}(A\times_ T B)$, we have
\begin{equation}\label{e10}
\begin{split}
\pi_{A\times_ T B}^*(\phi,\phi\circ T)((x_1,y_1){\otimes}(x_2,y_2))
=&(\phi,\phi\circ T)(x_1x_2+T(y_1)x_2+x_1T(y_2),y_1y_2)\\
=&\phi(x_1x_2)+\phi\circ T(y_1)\phi(x_2)+\phi(x_1)\phi\circ T(y_2)+\phi\circ T(y_1y_2).
\end{split}
\end{equation}
Now If we replace left hand side of (\ref{e10}) in the last line of (\ref{e9}), we obtain
\begin{equation}\label{e11}
\begin{split}
\rho_{A\times_ T B}(a,0)\circ(r_A{\otimes}r_A)^{*}(\phi\circ\pi_A)
=&\rho_{A\times_ T B}(a,0)(\pi_{A\times_ T B}^*(\phi,\phi\circ T))\\
=&\pi_{A\times_ T B}^{**}\circ\rho_{A\times_ T B}(a,0)(\phi,\phi\circ T)\\
=&\phi(a)+\phi\circ T(0)
=\phi(a).
\end{split}
\end{equation}
Comparing (\ref{e8}) and (\ref{e11}) we see that $\pi_A^{**}\circ\rho_A(a)(\phi)=\phi(a)$.
Moreover $\rho_A$ is a $A$-bimodule map, therefore
$A$ is $\phi$-biflat.

(ii) Since $A\times_ T B$ is $(0,\psi)$-biflat, there exists a $A\times_ T B$-bimodule morphism
 $\rho_{A\times_ T B}: A\times_ T B\rightarrow ((A\times_ T B)\hat{\otimes}(A\times_ T B))^{**}$
such that for every $(a,b)\in A\times_ T B$ we have
$$\pi_{A\times_ T B}^{**}\circ \rho_{A\times_ T B}((a,b))(0,\psi)=(0,\psi)(a,b)=\psi(b).$$
Now we define $\rho_B=(p_B\otimes p_B)^{**}\circ\rho_{A\times_ T B}\circ q_B$, where $p_B:A\times_ T B\rightarrow B$
is defined by $p(a,b)=b$ and $q_B:B\rightarrow A\times_ T B$ is defined by $q_B(b)=(0,b)$ for every $(a,b)\in A\times_ T B$. 
So we have to show that  
\begin{equation}\label{e12}
\pi_B^{**}\circ\rho_B(b)(\psi)=\psi(b)\quad(b\in B).
\end{equation}
To calculate the left hand side of (\ref{e12}) we have
\begin{equation}\label{e13}
\begin{split}
\pi_B^{**}\circ\rho_B(b)(\psi)
=&\pi_B^{**}\circ (p_B\otimes p_B)^{**}\circ \rho_{A\times_ T B}(0,b)(\psi)\\
=&(p_B\otimes p_B)^{**}\circ \rho_{A\times_ T B}(0,b)(\pi _B^*(\psi))\\
=&(p_B\otimes p_B)^{**}\circ \rho_{A\times_ T B}(0,b)(\psi\circ\pi_B)\\
=&\rho_{A\times_ T B}(0,b)\circ (p_B\otimes p_B)^{*}(\psi\circ\pi_B).
\end{split}
\end{equation}
To calculate the left hand side of (\ref{e13}) for every $(x_1,y_1){\otimes}(x_2,y_2)\in (A\times_ T B)\hat{\otimes}(A\times_ T B)$, we have
\begin{equation}\label{e14}
\begin{split}
\rho_{A\times_ T B}(0,b)\circ (p_B\otimes p_B)^{*}&(\psi\circ\pi_B)((x_1,y_1)\otimes(x_2,y_2))\\
&=\rho_{A\times_ T B}(0,b)(\psi\circ\pi_B)(p_B\otimes p_B)((x_1,y_1)\otimes(x_2,y_2))\\
&=\rho_{A\times_ T B}(0,b)(\psi\circ\pi_B)(y_1\otimes y_2)\\
&=\rho_{A\times_ T B}(0,b)\psi(y_1y_2).
\end{split}
\end{equation}
But $\psi(y_1y_2)$ in the above equation is obtained from
\begin{equation}\label{e15}
\pi_{A\times_ T B}^*(0,\psi)((x_1,y_1)\otimes(x_2,y_2))
=(0,\psi)(x_1x_2+x_1T(y_2)+T(y_1)x_2, y_1y_2)
=\psi(y_1y_2).
\end{equation}
Now if we replace  (\ref{e15}) in (\ref{e14})  we obtain 
\begin{equation}\label{e17}
\begin{split}
\rho_{A\times_ T B}(0,b)\circ (p_B\otimes p_B)^{*}(\psi\circ\pi_B)
=&\rho_{A\times_ T B}(0,b)\circ \pi_{A\times_ T B}^*(0,\psi)\\
=&\pi_{A\times_ T B}^{**}\circ\rho_{A\times_ T B}((0,b))(0,\psi)\\
=&(0,\psi)(0,b)
=\psi(b).  
\end{split}
\end{equation}
Therefore (\ref{e13}) and (\ref{e17}) follow that $\pi_B^{**}\circ\rho_B(b)(\psi)=\psi(b)$.
Since $\rho_B$ is a $B$-bimodule map,  $B$ is $\psi$-biflat.
\end{proof}
\end{thm}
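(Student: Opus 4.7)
The plan is to pull the given $\phi$-biflatness data for $A\times_T B$ back to $A$ (resp.\ to $B$) using two natural maps: the inclusion $q_A\colon A\to A\times_T B$, $q_A(a)=(a,0)$, and the projection-type map $r_A\colon A\times_T B\to A$, $r_A(a,b)=a+T(b)$. A short computation using the multiplication in $A\times_T B$ and the fact that $T$ is an algebra homomorphism shows that both $q_A$ and $r_A$ are algebra homomorphisms (for $r_A$ one gets $r_A((a,b)\times_T(c,d))=(a+T(b))(c+T(d))$). Consequently, the maps $q_A$, $r_A$ and hence $(r_A\otimes r_A)^{**}$ are compatible with the $A$-bimodule structure that each object inherits by pullback along $q_A$.

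Given the $(A\times_T B)$-bimodule morphism $\rho_{A\times_T B}\colon A\times_T B\to ((A\times_T B)\hat\otimes(A\times_T B))^{**}$ satisfying $\pi^{**}_{A\times_T B}\circ\rho_{A\times_T B}(a,b)(\phi,\phi\circ T)=\phi(a)+\phi\circ T(b)$, I would set
\[
\rho_A:=(r_A\otimes r_A)^{**}\circ\rho_{A\times_T B}\circ q_A\colon A\to (A\hat\otimes A)^{**},
\]
which is an $A$-bimodule morphism by the observation above. It then remains to verify $\pi_A^{**}\circ\rho_A(a)(\phi)=\phi(a)$ for $a\in A$. Moving $\pi_A^{**}$ and $(r_A\otimes r_A)^{**}$ across by duality, this reduces to the identity of functionals on $(A\times_T B)\hat\otimes(A\times_T B)$:
\[
(r_A\otimes r_A)^{*}(\phi\circ\pi_A)=\pi_{A\times_T B}^{*}(\phi,\phi\circ T).
\]
Both sides, when evaluated on an elementary tensor $(x_1,y_1)\otimes(x_2,y_2)$, expand to $\phi(x_1x_2)+\phi(x_1)\phi\circ T(y_2)+\phi\circ T(y_1)\phi(x_2)+\phi\circ T(y_1y_2)$, using only multiplicativity of $\phi$ and $\phi\circ T$ together with the definition of the $T$-Lau product. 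Once this identity is in hand, substituting and invoking the hypothesis at the element $(a,0)$ yields $\pi_A^{**}\circ\rho_A(a)(\phi)=\phi(a)+\phi\circ T(0)=\phi(a)$, finishing (i).

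For (ii) the strategy is entirely parallel, with $q_B\colon B\to A\times_T B$, $q_B(b)=(0,b)$, and $p_B\colon A\times_T B\to B$, $p_B(a,b)=b$, both of which are easily seen to be algebra homomorphisms. Setting $\rho_B:=(p_B\otimes p_B)^{**}\circ\rho_{A\times_T B}\circ q_B$ produces a $B$-bimodule map, and the analogous dual identity $(p_B\otimes p_B)^{*}(\psi\circ\pi_B)=\pi_{A\times_T B}^{*}(0,\psi)$ on pure tensors reduces to the trivial computation $\psi(y_1y_2)=\psi(y_1y_2)$; here the cross terms $x_1T(y_2)$, $T(y_1)x_2$ are killed by the zero in the first coordinate of $(0,\psi)$, which is the reason the argument works cleanly.

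The main obstacle is really bookkeeping rather than substance: one has to keep straight the passage between $\rho$, $\rho^{**}$, $\pi^{*}$ and $\pi^{**}$, and be careful that the bimodule structures on the various tensor products and their biduals line up under the algebra-homomorphism pullbacks along $q_A$/$r_A$ (resp.\ $q_B$/$p_B$). Once the key dual identity relating $(r_A\otimes r_A)^{*}(\phi\circ\pi_A)$ to $\pi_{A\times_T B}^{*}(\phi,\phi\circ T)$ is isolated, the rest of the proof is a mechanical unwinding.
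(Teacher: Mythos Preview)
Your proposal is correct and follows essentially the same route as the paper: the same auxiliary maps $q_A,r_A$ (resp.\ $q_B,p_B$), the same definition $\rho_A=(r_A\otimes r_A)^{**}\circ\rho_{A\times_TB}\circ q_A$, and the same reduction to the dual identity $(r_A\otimes r_A)^{*}(\phi\circ\pi_A)=\pi_{A\times_TB}^{*}(\phi,\phi\circ T)$ checked on elementary tensors. Your explicit remark that $q_A,r_A,q_B,p_B$ are algebra homomorphisms is a slight improvement in clarity over the paper, which simply asserts that the resulting $\rho_A$ and $\rho_B$ are bimodule maps.
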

\begin{thm}
Let $A$ and  $B$ be Banach algebras, let $T: B\rightarrow A$
be an algebra homomorphism with $\parallel T\parallel\leq 1$ and let $\phi\in \bigtriangleup (A)$
and $\psi\in \bigtriangleup (B)$. Then
\begin{enumerate}
\item[(i)]  $A\times _T B$ is $(\phi,\phi\circ T)$-biprojective
if and only if $A$ is $\phi$-biprojective.
\item[(ii)]  $A\times _TB$ is $(0,\psi)$-biprojective if and only if
$B$ is $\psi$-biprojective. 
\end{enumerate}
\begin{proof}
(i) Let $A\times _T B$ be $(\phi,\phi\circ T)$-biprojective. Then there exists a bounded $A\times _T B$-bimodule morphism
$\rho _{A\times _T B}:A\times _T B\rightarrow (A\times _T B)\hat{\otimes}(A\times _T B)$
such that 
\begin{equation}\label{e18}
(\phi ,\phi\circ T)\circ\pi_{A\times _T B}\circ \rho_{A\times _T B}=(\phi ,\phi\circ T).
\end{equation}
To show that $A$ is $\phi$-biprojective, we define $\rho_A: A\rightarrow A\hat{\otimes}A$ by $\rho_A=(r_A\otimes r_A)\circ \rho _{A\times _T B}\circ q_A$, where
$q_A:A\rightarrow A\times _T B$ is defined by $q_A(a)=(a,0)$
for every $a\in A$ and $r_A: A\times _T B\rightarrow A$
is defined by $r_A(a,b)=a+T(b)$, for
every $(a,b)\in A\times _T B$. Now we claim that for every $a\in A$
\begin{equation}\label{e19}
\phi\circ\pi_A\circ\rho_A(a)=\phi(a).
\end{equation}
For the left hand side of (\ref{e19}) we have
\begin{equation*}
\begin{split}
\phi\circ\pi_A\circ\rho_A(a)=&\phi\circ\pi_A\circ(r_A\otimes r_A)\circ \rho_{A\times _TB}\circ q_A(a)\\
=&\phi\circ\pi_A\circ(r_A\otimes r_A)\circ \rho_{A\times _TB}(a,0)\\
\end{split}
\end{equation*}
Since $\pi_A\circ (r_A\otimes r_A)=r_A\circ \pi_{A\times_ TB}$, we have
\begin{equation}\label{e20}
\phi\circ\pi_A\circ\rho_A(a)=\phi\circ r_A\circ\pi_{A\times_ TB}\circ\rho_{A\times_ TB}(a,0).
\end{equation}
On the other hand, for every $(a,b)\in A\times_ TB$ we have 
\begin{equation}\label{e21}
\begin{split}
(\phi\circ r_A)(a,b)=&\phi(r_A(a,b))\\=&\phi(a+T(b))\\=&\phi(a)+\phi\circ T(b)\\
=&(\phi ,\phi\circ T)(a,b).
\end{split}
\end{equation}
The equations (\ref{e20}) and (\ref{e21}) imply that 
$$\phi\circ\pi_A\circ\rho_A(a)=(\phi ,\phi\circ T)\circ\pi_{A\times _TB}\circ \rho_{A\times_TB}(a,0).$$
The equation (\ref{e18}) says that for every $(a,b)\in A\times _TB$
we have $(\phi , \phi\circ T)\circ \pi_{A\times _TB}\circ\rho_{A\times _TB}(a,b)=\phi(a)+\phi\circ T(b).$
Hence we have 
\begin{equation*}
\begin{split}
\phi\circ \pi _A\circ\rho_A(a)=&(\phi, \phi\circ T)\circ\pi_{A\times _TB}\circ\rho_{A\times _TB}(a,0)\\=&\phi(a)+\phi\circ T(0)\\=&\phi(a).
\end{split}
\end{equation*}
Since $\rho_A$ is  a $A$-bimodule map, so $A$ is $\phi$-biprojective.

Conversely, let $A$ be  $\phi$-biprojective. 
Then there exists a bounded $A$-bimodule morphism $\rho: A\rightarrow A\hat{\otimes}A$ such that 
\begin{equation}\label{e30}
\phi\circ\pi_A\circ\rho_A=\phi.
\end{equation}
We define $\rho_{A\times _TB}:A\times _TB\rightarrow (A\times _TB)\hat{\otimes}(A\times _TB)$
by $\rho_{A\times _TB}=(q_A\otimes q_A)\circ \rho_A\circ r_A$, where $q_A:A\rightarrow (A\times _TB)$ is defined by $q_A(a) =(a,0)$ for every $a\in A$ and $r_A: A\times _TB\rightarrow A$ is defined by $r_A(a,b)=a+T(b)$ for every $(a,b)\in A\times_TB$.

Now we show that for every $(a,b)\in A\times _TB$ 
\begin{equation}\label{e31}
(\phi,\phi\circ T)\circ \pi _{A\times _TB}\circ\rho_{A\times _TB}(a,b)=(\phi, \phi\circ T)(a,b) =\phi(a)+\phi\circ T(b).
\end{equation}
To calculate the left hand side of (\ref{e31}) we have
\begin{equation*}
\begin{split}
(\phi,\phi\circ T)\circ \pi _{A\times _TB}\circ\rho_{A\times _TB}(a,b)
=&(\phi,\phi\circ T)\circ \pi _{A\times _TB}\circ(q_A\otimes q_A)\circ\rho _A\circ r_A(a,b)\\
=&(\phi,\phi\circ T)\circ \pi _{A\times _TB}\circ(q_A\otimes q_A)\circ \rho _A(a+T(b)).
\end{split}
\end{equation*}
Since $\pi_{A\times_TB}\circ (q_A\otimes q_A)=q_A\circ\pi_A$, we have 
\begin{equation}\label{e32}
\begin{split}
(\phi,\phi\circ T)\circ \pi _{A\times _TB}\circ\rho_{A\times _TB}(a,b)=&(\phi, \phi\circ T)\circ \pi_{A\times_TB}\circ(q_A\otimes q_A)\circ\rho_A(a+T(b))\\ =&(\phi,\phi\circ T)\circ q_A\circ\pi_A\circ\rho_A(a+T(b)).
\end{split}
\end{equation}
In contrast, for every $a\in A$ we have
\begin{equation}\label{e33}
\begin{split}
(\phi,\phi\circ T)\circ q_A(a)=&(\phi, \phi\circ T)(a,0)\\ =&\phi(a)+\phi\circ T(0)\\
=&\phi(a).
\end{split}
\end{equation}
The equation (\ref{e32}) and (\ref{e33}) follow that 
\begin{equation*}
\begin{split}
(\phi,\phi\circ T)\circ \pi_{A\times _TB}\circ\rho_{A\times _TB}(a,b)
=&(\phi, \phi\circ T)\circ q_A\circ\pi_A\circ\rho_A(a+T(b))\\ 
=&\phi\circ\pi_A\circ\rho_A(a+T(b)).
\end{split}
\end{equation*}
The equation (\ref{e30}) says that for every $a\in A$, we have $\phi\circ\pi_A\circ\rho_A(a)=\phi(a)$. 

Hence we have 
\begin{equation*}
\begin{split}
(\phi,\phi\circ T)\circ \pi_{A\times _TB}\circ\rho_{A\times _TB}(a,b)
=&\phi\circ \pi_A\circ\rho_A(a+T(b))\\ 
=&\phi(a+T(b))\\
=&\phi(a)+\phi\circ T(b).
\end{split}
\end{equation*}  
We note that $A$ is a Banach $A\times _TB$-bimodule with the following module actions 
$$(a,b)\cdot c=(a+T(b))c,\quad c\cdot(a,b)=c(a+T(b))\qquad ((a,b)\in A\times _TB, c\in A).$$
Since $\rho_{A\times_T B}$ is the composition of $A\times_T B$-bimodule maps, so $A\times_T B$
is $(\phi,\phi\circ T)$-biprojective.

(ii) Let $A\times _TB$ be $(0,\psi)$-biprojective. Then there exists a bounded 
$A\times _TB$-bimodule morphism $\rho_{A\times _TB} : A\times _TB\rightarrow (A\times _TB)\hat{\otimes}(A\times _TB)$
such that
\begin{equation}\label{e22}
(0,\psi)\circ\pi_{A\times _TB}\circ\rho_{A\times _TB}=(0,\psi).
\end{equation}
 We define $\rho_B: B\rightarrow B\hat{\otimes} B$
 by $\rho_B=(p_B\otimes p_B)\circ\rho_{A\times _TB}\circ q_B$, where 
 $q_B:B\rightarrow A\times _TB$ is defined by $q_B(b)=(0,b)$
 for every $b\in B$ and $p_B:(A\times _TB)\rightarrow B$
 is defined by $p_B(a,b)=b$
 for every $(a,b)\in A\times _TB$.
We show that for every $b\in B$
\begin{equation}\label{e23}
\psi\circ\pi_B\circ\rho_B(b)=\psi(b).
\end{equation} 
To calculate the left hand side of (\ref{e23}) we have
\begin{equation*}
\begin{split}
\psi\circ\pi_B\circ\rho_B(b)=&\psi\circ\pi_B\circ(p_B\otimes p_B)\circ \rho_{A\times _TB}\circ q_B(b)\\
=&\psi\circ\pi_B\circ(p_B\otimes p_B)\circ \rho_{A\times _TB}(0,b).
\end{split}
\end{equation*}
Since $\pi_B\circ(p_B\otimes p_B)=p_B\circ \pi_{A\times _TB}$,
therefore
\begin{equation}\label{e24}
\begin{split}
\psi\circ\pi_B\circ\rho_B(b)=&\psi\circ\pi_B\circ(p_B\otimes p_B)\circ\rho_{A\times_T B}(0,b)\\
=&\psi\circ p_B\circ\pi_{A\times_T B}\circ\rho_{A\times_T B}(0,b).
\end{split}
\end{equation}
In contrast,  for every $(a,b)\in A\times_T B$ we have 
\begin{equation}\label{e25}
\psi\circ p_B(a,b)=\psi(b)=(0,\psi)(a,b).
\end{equation}
The equation (\ref{e24}) and (\ref{e25}) follow that for every $(a,b)\in A\times_T B$
\begin{equation*}
\begin{split}
\psi\circ\pi_B\circ\rho_B(b)=&\psi\circ p_B\circ\pi_{A\times_T B}\circ\rho_{A\times_T B}(0,b)\\
=&(0,\psi)\circ\pi_{A\times_T B}\circ\rho_{A\times_T B}(0,b).
\end{split}
\end{equation*}
The equation (\ref{e22}) says that for every $(a,b)\in A\times_T B$
\begin{equation*}
\begin{split}
(0,\psi)\circ\pi_{A\times_T B}\circ\rho _{A\times_T B}(a,b)=&\psi(b).
\end{split}
\end{equation*}
Hence, we have 
$$\psi\circ\pi_B\circ\rho_B(b)=(0,\psi)\circ\pi_{A\times_T B}\circ\rho_{A\times_T B}(0,b)
=\psi(b).$$
Since $\rho_B$ is the composition of $B$-bimodule maps, so $B$ is $\psi$-biprojective.

Conversely, let $B$ be $\psi$-biprojective. Then there exists a bounded
 $B$-bimodule morphism $\rho_B: B\rightarrow B\hat{\otimes}B$ such that 
\begin{equation}\label{e26}
\psi\circ\pi_B\circ\rho_B=\psi.
\end{equation} 
We define $\rho_{A\times_T B}: A\times_T B\rightarrow (A\times_T B)\hat{\otimes}(A\times_T B)$
by $\rho_{A\times_T B}=(s_B\otimes s_B)\circ\rho_B\circ p_B$, where
$s_B: B\rightarrow (A\times_T B)$
is defined by 
$s_B(b)=(-T(b),b)$
for every $b\in B$ and $p_B:A\times_T B\rightarrow B$ is defined by $p_B(a,b)=b$
for every $(a,b)\in A\times_T B$. We show that for every $(a,b)\in A\times_T B$
\begin{equation}\label{e27}
(0,\psi)\circ\pi_{A\times_T B}\circ\rho_{A\times_T B}(a,b)=(0,\psi)(a,b)
=\psi(b).
\end{equation}
To calculate the left hand side of (\ref{e27}) we have
\begin{equation*}
\begin{split}
(0,\psi)\circ\pi_{A\times_T B}\circ\rho_{A\times_T B}(a,b)=&(0,\psi)\circ\pi_{A\times_T B}\circ(s_B\otimes s_B)\circ\rho_B\circ p_B(a,b)\\
=&(0,\psi)\circ\pi_{A\times_T B}\circ(s_B\otimes s_B)\circ\rho_B(b).
\end{split}
\end{equation*}
Since $\pi_{A\times_T B}\circ(s_B\otimes s_B)=s_B\circ \pi_B$, we have 
\begin{equation}\label{e28}
\begin{split}
(0,\psi)\circ\pi_{A\times_T B}\circ\rho_{A\times_T B}(a,b)=&(0,\psi)\circ\pi_{A\times_T B}\circ(s_B\otimes s_B)\circ\rho_B(b)\\
=&(0,\psi)\circ s_B\circ\pi_B\circ\rho_B(b).
\end{split}
\end{equation}
In contrast, for every $b\in B$ we have
\begin{equation}\label{e29}
(0,\psi)\circ s_B(b)=(0,\psi)(-T(b),b)
=\psi(b).
\end{equation}
The equations (\ref{e28}) and (\ref{e29}) follow that for every $(a,b)\in A\times_T B$
\begin{equation*}
\begin{split}
(0,\psi)\circ \pi_{A\times_T B}\circ \rho_{A\times_T B}(a,b)=&(0,\psi)\circ s_B\circ\pi_B\circ\rho_B(b)\\
=&\psi\circ\pi_B\circ\rho_B(b).
\end{split}
\end{equation*}
The equation (\ref{e26}) says that for every $b\in B$,
$\psi\circ\pi_B\circ\rho_B(b)=\psi(b)$. Hence we have
\begin{equation*}
(0,\psi)\circ\pi_{A\times_T B}\circ\rho_{A\times_T B}(a,b)=\psi\circ\pi_B\circ\rho_B(b)
=\psi(b).
\end{equation*}
We note that $B$ is a Banach $A\times_T B$-bimodule by the module actions
$$(a,b)\cdot d=bd,\quad\quad d\cdot (a,b)=db\quad\quad ((a,b)\in A\times_T B, d\in B).$$
Hence $\rho_B$
is $A\times_T B$-bimodule map.
Since $\rho_{A\times_T B}$ is the composition of $A\times_T B$-bimodule maps, so $A\times_T B$
is $(0,\psi)$-biprojective.
\end{proof}
\end{thm}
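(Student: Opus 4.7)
The plan is to follow the same template used for the biflat theorem, but now we also need the converse implications, and we must work with $\rho$ directly rather than its biadjoint. For part (i), in the forward direction I would take the given bounded $A\times_T B$-bimodule morphism $\rho_{A\times_T B}\colon A\times_T B\to (A\times_T B)\hat{\otimes}(A\times_T B)$ satisfying $(\phi,\phi\circ T)\circ\pi_{A\times_T B}\circ\rho_{A\times_T B}=(\phi,\phi\circ T)$ and define
\[
\rho_A=(r_A\otimes r_A)\circ\rho_{A\times_T B}\circ q_A,
\]
with $q_A(a)=(a,0)$ and $r_A(a,b)=a+T(b)$. The chain identity $\pi_A\circ(r_A\otimes r_A)=r_A\circ\pi_{A\times_T B}$ together with $\phi\circ r_A=(\phi,\phi\circ T)$ reduces $\phi\circ\pi_A\circ\rho_A(a)$ to $(\phi,\phi\circ T)\circ\pi_{A\times_T B}\circ\rho_{A\times_T B}(a,0)=\phi(a)$; then viewing $A$ as an $A\times_T B$-bimodule via $(a,b)\cdot c=(a+T(b))c$ makes $q_A$ and $r_A$ into $A\times_T B$-bimodule maps, so $\rho_A$ is an $A$-bimodule map through the restriction along $q_A$.

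For the converse in (i), starting from $\rho_A\colon A\to A\hat{\otimes}A$ with $\phi\circ\pi_A\circ\rho_A=\phi$, I would set
\[
\rho_{A\times_T B}=(q_A\otimes q_A)\circ\rho_A\circ r_A.
\]
Now the dual identity $\pi_{A\times_T B}\circ(q_A\otimes q_A)=q_A\circ\pi_A$ and $(\phi,\phi\circ T)\circ q_A=\phi$ collapse $(\phi,\phi\circ T)\circ\pi_{A\times_T B}\circ\rho_{A\times_T B}(a,b)$ to $\phi\circ\pi_A\circ\rho_A(a+T(b))=\phi(a)+\phi\circ T(b)$, which is exactly $(\phi,\phi\circ T)(a,b)$. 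The bimodule property of $\rho_{A\times_T B}$ follows because $q_A$ and $r_A$ are $A\times_T B$-bimodule maps with respect to the actions described above, and $\rho_A$ is $A$-bilinear.

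Part (ii) runs in parallel using $q_B(b)=(0,b)$ and $p_B(a,b)=b$ for the forward direction; one has $\pi_B\circ(p_B\otimes p_B)=p_B\circ\pi_{A\times_T B}$ and $\psi\circ p_B=(0,\psi)$, and a routine check yields $\psi\circ\pi_B\circ\rho_B(b)=\psi(b)$. The interesting point is the converse: the naive choice $q_B$ is not a Banach algebra homomorphism, so to pull a $\psi$-biprojective diagonal of $B$ back to $A\times_T B$ I would use instead the section
\[
s_B\colon B\to A\times_T B,\qquad s_B(b)=(-T(b),b),
\]
which is an algebra homomorphism (a direct expansion of $s_B(b_1)\times_T s_B(b_2)$ gives $(-T(b_1b_2),b_1b_2)$) and satisfies $p_B\circ s_B=\mathrm{id}_B$ and $(0,\psi)\circ s_B=\psi$. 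Defining $\rho_{A\times_T B}=(s_B\otimes s_B)\circ\rho_B\circ p_B$ and using $\pi_{A\times_T B}\circ(s_B\otimes s_B)=s_B\circ\pi_B$ then collapses everything to $\psi\circ\pi_B\circ\rho_B(b)=\psi(b)$.

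The main obstacle I anticipate is the bimodule verification in the converse direction of (ii). One must equip $B$ with the $A\times_T B$-bimodule structure $(a,b)\cdot d=bd$, $d\cdot(a,b)=db$ so that $p_B$ becomes an $A\times_T B$-bimodule map and $\rho_B$ is automatically $A\times_T B$-bilinear; simultaneously, because $s_B$ is an algebra homomorphism, it is also an $A\times_T B$-bimodule map for this structure, so the composite $\rho_{A\times_T B}$ inherits $A\times_T B$-bilinearity. All other verifications are routine unwinding of the definitions of $\pi$ and the $\times_T$-product, so once the correct section $s_B$ is identified the proof is essentially a bookkeeping exercise mirroring the biflat proof, minus the biduals.
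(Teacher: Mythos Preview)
Your proposal is correct and follows essentially the same approach as the paper: the same auxiliary maps $q_A$, $r_A$, $p_B$, $q_B$, and the section $s_B(b)=(-T(b),b)$ are used, with the same intertwining identities $\pi_A\circ(r_A\otimes r_A)=r_A\circ\pi_{A\times_T B}$, $\pi_{A\times_T B}\circ(q_A\otimes q_A)=q_A\circ\pi_A$, $\pi_B\circ(p_B\otimes p_B)=p_B\circ\pi_{A\times_T B}$, and $\pi_{A\times_T B}\circ(s_B\otimes s_B)=s_B\circ\pi_B$, together with the same $A\times_T B$-bimodule structures on $A$ and $B$. The only point to tighten is the justification that $s_B$ is an $A\times_T B$-bimodule map: being an algebra homomorphism alone does not give this, but the direct computation $(a,b)\times_T s_B(d)=(-T(bd),bd)=s_B(bd)$ does, and the paper likewise leaves this step implicit.
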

\section{approximate amenability and pseudo amenability}
In this section we investigate the approximate amenability and pseudo amenability of $A\times_TB$. 
We also provide a necessary  and sufficient conditions for  $(\phi,\phi\circ T)$-pseudo amenability and $(0,\psi)$-pseudo amenability
of $A\times_TB$. Next lemma is similar to \cite[Proposition 3.2]{ABT}, which we omit the proof. 
\begin{lem}\label{lem3.1}
Let $A$ and $B$ be Banach algebras and let  
$T:B\rightarrow A$ be an algebra homomorphism  with $\Vert T\Vert\leq 1$. Then 
\begin{enumerate}
\item[(i)] If $\lbrace (e_{\alpha},\eta_{\alpha})\rbrace$ is (bounded) weakly approximate identity for $A\times _TB$, then $\lbrace e_{\alpha}+T(\eta_{\alpha})\rbrace$ and $\lbrace \eta_{\alpha}\rbrace$ are (bounded) weakly approximate identities for $A$
and $B$, respectively.
\item[(ii)] If $\lbrace e_{\alpha}\rbrace$ and $\lbrace \eta_{\beta}\rbrace$ are (bounded) weakly approximate identities for $A$ and $B$, respectively, then $\lbrace(e_{\alpha}-T(\eta_{\beta}),\eta_{\beta})\rbrace$ is (bounded) weakly approximate identity for
$A\times_T B$.
\end{enumerate}
\end{lem}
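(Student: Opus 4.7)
The plan is to pass to the dual, where $(A\times_T B)^*$ is identified with $A^*\times B^*$ via $\langle(f,g),(a,b)\rangle=f(a)+g(b)$, and then to expand the $T$-Lau product and read off the coordinate-wise convergence statements. In both parts the work is a direct expansion; the only content is choosing the right test elements in (i) and spotting the cancellation in (ii).

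For part (i), I would start from the identity
\[
(e_\alpha,\eta_\alpha)\times_T(a,b)-(a,b)=\bigl((e_\alpha+T(\eta_\alpha))a+e_\alpha T(b)-a,\ \eta_\alpha b-b\bigr).
\]
Testing the hypothesis against $(a,b)=(0,b)$ with functionals of the form $(0,g)$, $g\in B^*$, yields $g(\eta_\alpha b-b)\to 0$, so $\{\eta_\alpha\}$ is a weak approximate identity for $B$; testing against $(a,b)=(a,0)$ with $(f,0)$, $f\in A^*$, yields $f((e_\alpha+T(\eta_\alpha))a-a)\to 0$, so $\{e_\alpha+T(\eta_\alpha)\}$ is a weak approximate identity for $A$. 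The right-handed assertions are obtained symmetrically from the mirror computation, and boundedness is immediate from $\|T\|\le 1$ together with the $\ell^1$-norm on $A\times_T B$.

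For part (ii), the key algebraic observation is that the cross-terms in the $T$-Lau product collapse: using the homomorphism property of $T$, a direct expansion gives
\[
(e_\alpha-T(\eta_\beta),\eta_\beta)\times_T(a,b)=\bigl(e_\alpha(a+T(b))-T(\eta_\beta b),\ \eta_\beta b\bigr),
\]
so pairing the difference with $(f,g)\in A^*\times B^*$ produces
\[
f\bigl(e_\alpha(a+T(b))-(a+T(b))\bigr)+(f\circ T)(b-\eta_\beta b)+g(\eta_\beta b-b).
\]
The first summand depends only on $\alpha$ and tends to $0$ since $\{e_\alpha\}$ is a weak approximate identity for $A$ applied to $a+T(b)\in A$. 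The other two summands depend only on $\beta$ and tend to $0$ since $\{\eta_\beta\}$ is a weak approximate identity for $B$, noting that $f\circ T\in B^*$. I would then index the combined net by the product directed set $I\times J$ ordered componentwise; because the $\alpha$- and $\beta$-contributions are decoupled, an $\varepsilon/2$ split yields joint convergence to $0$. The right-sided version is an analogous expansion, and boundedness follows from $\|e_\alpha-T(\eta_\beta)\|\le\|e_\alpha\|+\|\eta_\beta\|$ plus the boundedness of $\{\eta_\beta\}$.

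The only mildly delicate point is the upgrade from iterated to joint convergence in (ii), but since the two contributions are algebraically separated along the two indices, this is a routine $\varepsilon/2$-argument rather than a genuine obstacle. The essential ingredient is simply the algebraic simplification that $(-T(\eta_\beta),\eta_\beta)$ behaves as a $T$-twisted embedding of $\eta_\beta$, killing the $T(\eta_\beta)a$ term and reducing the first component to a clean product in $A$.
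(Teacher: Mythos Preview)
Your argument is correct: the expansions in both parts are accurate, the cancellation in (ii) is exactly the point, and the product-net $\varepsilon/2$ argument is the standard way to pass from separated $\alpha$- and $\beta$-dependence to joint convergence. The paper itself omits the proof entirely, referring instead to \cite[Proposition~3.2]{ABT} for an analogous result, so there is nothing to compare against; your direct verification is precisely the kind of computation one would expect to underlie that reference.
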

We use an analogue version of the method used in the \cite[Theorem 4.1]{BHA} to prove the next theorem.
\begin{thm}\label{t3-2}
Let $A$ and $B$ be Banach algebras and 
let $A\times_ T B$ be approximately amenable for an algebra homomorphism $T: B\rightarrow A$ with $\Vert T\Vert\leq 1$. Then $A$ and $B$ are approximately amenable.
\end{thm}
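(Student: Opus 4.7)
The plan is to exhibit both $A$ and $B$ as continuous algebra epimorphic images of $A\times_T B$ and then invoke the standard fact (Ghahramani--Loy) that approximate amenability is inherited by continuous algebra epimorphic images. The two maps one wants are $p_B:A\times_T B\to B$, $(a,b)\mapsto b$, and $r_A:A\times_T B\to A$, $(a,b)\mapsto a+T(b)$. The first is obviously a contractive surjective homomorphism from the definition of the $T$-Lau product. For the second, a short computation that uses precisely the hypothesis that $T$ is a homomorphism gives
\[
r_A\bigl((a,b)\times_T(c,d)\bigr)=ac+T(b)c+aT(d)+T(b)T(d)=(a+T(b))(c+T(d))=r_A(a,b)\,r_A(c,d),
\]
and surjectivity is immediate from $r_A(a,0)=a$. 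Both maps have norm $\le 1$ because $\Vert T\Vert\le 1$.

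If one prefers to avoid citing the epimorphism result, the argument can be unwound directly. Given a Banach $A$-bimodule $X$ and a continuous derivation $D:A\to X^*$, make $X$ into an $A\times_T B$-bimodule through $r_A$ by setting $(a,b)\cdot x:=(a+T(b))\cdot x$ and symmetrically on the right, and put $\widetilde D:=D\circ r_A:A\times_T B\to X^*$. Since $r_A$ is a homomorphism and the module actions are defined through it, $\widetilde D$ is a derivation. By approximate amenability of $A\times_T B$, there is a net $(\xi_\alpha)\subset X^*$ with
\[
\widetilde D(a,b)=\lim_\alpha\bigl((a,b)\cdot\xi_\alpha-\xi_\alpha\cdot(a,b)\bigr)\qquad\bigl((a,b)\in A\times_T B\bigr).
\]
Restricting to the element $(a,0)$ and using $r_A(a,0)=a$ gives $D(a)=\lim_\alpha(a\cdot\xi_\alpha-\xi_\alpha\cdot a)$, so $D$ is approximately inner. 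The argument for $B$ is identical with $p_B$ in place of $r_A$, using that $p_B(0,b)=b$ to identify the $(A\times_T B)$-action on a $B$-bimodule $Y$ with the given $B$-action.

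The only step that requires any care is checking the module-action compatibility: one must verify that the $(A\times_T B)$-bimodule structure induced on $X$ through $r_A$ really does reduce to the original $A$-action on elements of the form $(a,0)$, and similarly for $Y$ through $p_B$. This is routine once one records $r_A(a,0)=a$ and $p_B(0,b)=b$; no approximate identities or Cohen-type factorization arguments are needed, which is why the approximately amenable case is cleaner than the amenable case in \cite{BHA}. I expect no genuine obstacle beyond bookkeeping.
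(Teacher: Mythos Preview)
Your proof is correct and is essentially the paper's own argument: the paper also pulls back a derivation $d:A\to X^*$ along the homomorphism $(a,b)\mapsto a+T(b)$ (respectively $(a,b)\mapsto b$ for $B$), applies approximate amenability of $A\times_T B$, and restricts to $(a,0)$ to see that $d$ is approximately inner. Your remark that one may simply cite the Ghahramani--Loy epimorphism theorem is a clean shortcut the paper does not make explicit, but the underlying maps $r_A$ and $p_B$ are exactly those used there.
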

\begin{proof}
Suppose that  $X$ is a Banach $A$-bimodule and  $d: A\rightarrow X^*$ is a bounded derivation. Simply we consider this derivation as a new derivation
$d:A\times \lbrace 0\rbrace\rightarrow X^*\times \lbrace 0\rbrace$. Moreover, $X\times \lbrace 0\rbrace$ is a Banach $A\times _T B$-
bimodule.
Let $\phi:A\times _T B\rightarrow A\times \lbrace 0\rbrace$ be a  map defined by
$$\phi(a,b)=(a+T(b),0)\qquad((a,b)\in A\times _T B).$$
Now we take $D:A\times _T B\rightarrow X^*\times \lbrace 0\rbrace$ defined by $D=d\circ \phi$. Then $D$ is a bounded derivation on $A\times _T B$.
Since $A\times _T B$ is approximately amenable, there exists a net $(y_\alpha)\subseteq X^*$ such that
$$D(a,b)=\lim ((a,b)\cdot (y_\alpha ,0)-(y_\alpha ,0)\cdot (a,b)) \qquad ((a,b)\in A\times _T B).$$
Therefore
$$d(a)=d(a,0)=D(a,0)=\lim ((a,0)\cdot (y_\alpha ,0)-(y_\alpha ,0)\cdot (a,0))=\lim (a\cdot y_\alpha -y_\alpha \cdot a).$$
which means that  $A$ is approximately amenable. 

Similarly, take $\phi :A\times _T B\rightarrow \lbrace 0\rbrace \times B$ defined by $\phi (a,b)=(0,b)$, 
it follows that $B$ is approximately amenable.
\end{proof}
In the following example we show that the converse of the previous theorem is not valid in general case. In fact we show that if $A$ and $B$ are approximately amenable,
then $A\times_TB$ is not necessarily approximately amenable for an algebra homomorphism $T:B\rightarrow A$
with $\Vert T\Vert\leq 1$.

\begin{example}
	Let $l^1$ denote the well-known space of complex sequences and 
	let $K(l^1)$ be the space of all compact operators on $l^1$. It is well known that the Banach algebra $K(l^1)$
	is amenable. We renorm $K(l^1)$ with the family of equivalent norm $\Vert\cdot\Vert^{k}$ such that its bounded left approximate identity  will be the constant 1 and its bounded right approximate identity will be $k+1$.

	Now if we consider $c_0$-direct-sum $A=\oplus_{k=1}^{\infty}(K(l^1),\Vert\cdot\Vert^{k})$, then $A$ has a bounded left approximate identity but no bounded right approximate identity, see \cite[page 3931]{GHAHR} for more details. Now by choosing $T=0$ we have $A\oplus A^{op}=A\times_T A^{op}$. Note that $A$ and $A^{op}$, the opposite algebra, are boundedly approximately 
	amenable \cite[Theorem 3.1]{GHAHR}, but sine $A\oplus A^{op}$ has no bounded approximate identity, it is not  approximately amenable \cite[Theorem 4.1]{GHAHR}.
\end{example}
\begin{thm}
Let $A$ and $B$ be Banach algebras and let $A\times _T B$ be  pseudo amenable  for an algebra homomorphism $T:B\rightarrow A$ 
with $\Vert T\Vert\leq 1$.
Then $A$ and $B$ are pseudo amenable.
\end{thm}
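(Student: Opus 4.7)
The plan is to imitate the strategy already used for $\phi$-biflatness and $\phi$-biprojectivity, exploiting the two natural algebra homomorphisms $r_A : A\times_T B \to A$, $r_A(a,b)=a+T(b)$, and $p_B : A\times_T B \to B$, $p_B(a,b)=b$, together with the two natural algebra inclusions $q_A(a)=(a,0)$ and $q_B(b)=(0,b)$. First I would unfold the hypothesis: there is a net $(m_\alpha) \subseteq (A\times_T B)\hat\otimes(A\times_T B)$ with
\[
(a,b)\cdot m_\alpha - m_\alpha\cdot (a,b)\to 0 \quad\text{and}\quad \pi_{A\times_T B}(m_\alpha)\,(a,b)\to (a,b)
\]
for every $(a,b)\in A\times_T B$.

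For the $A$-part, set $\rho_\alpha := (r_A\otimes r_A)(m_\alpha)\in A\hat\otimes A$. A short check shows that $r_A$ is an algebra homomorphism, since $(a+T(b))(c+T(d))=ac+T(b)c+aT(d)+T(bd)=r_A\bigl((a,b)\times_T(c,d)\bigr)$, so $\pi_A\circ(r_A\otimes r_A)=r_A\circ\pi_{A\times_T B}$. Now I would test the commutation condition on the element $(a,0)\in A\times_T B$ and verify, by unpacking the $\times_T$ product on each tensor factor, the intertwining identities
\[
(r_A\otimes r_A)\bigl((a,0)\cdot m_\alpha\bigr)=a\cdot\rho_\alpha,\qquad (r_A\otimes r_A)\bigl(m_\alpha\cdot(a,0)\bigr)=\rho_\alpha\cdot a.
\]
Boundedness of $r_A\otimes r_A$ then gives $a\cdot\rho_\alpha-\rho_\alpha\cdot a\to 0$. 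For the approximate-identity condition I would write $\pi_{A\times_T B}(m_\alpha)=(s_\alpha,t_\alpha)$ and compute $(s_\alpha,t_\alpha)\times_T(a,0)=\bigl((s_\alpha+T(t_\alpha))a,0\bigr)$, which converges to $(a,0)$, so its first coordinate $\pi_A(\rho_\alpha)a=(s_\alpha+T(t_\alpha))a$ converges to $a$. Hence $A$ is pseudo amenable.

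For the $B$-part, set $\sigma_\alpha := (p_B\otimes p_B)(m_\alpha)\in B\hat\otimes B$. Again $p_B$ is an algebra homomorphism, so $\pi_B\circ(p_B\otimes p_B)=p_B\circ\pi_{A\times_T B}$. Testing on $(0,b)$, I would use $(0,b)\times_T(x,y)=(T(b)x,by)$ and $(x,y)\times_T(0,b)=(xT(b),yb)$ to conclude
\[
(p_B\otimes p_B)\bigl((0,b)\cdot m_\alpha\bigr)=b\cdot\sigma_\alpha,\qquad (p_B\otimes p_B)\bigl(m_\alpha\cdot(0,b)\bigr)=\sigma_\alpha\cdot b,
\]
giving $b\cdot\sigma_\alpha-\sigma_\alpha\cdot b\to 0$. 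Finally $\pi_{A\times_T B}(m_\alpha)\cdot(0,b)=(s_\alpha T(b),t_\alpha b)\to(0,b)$, so its second coordinate $\pi_B(\sigma_\alpha)b=t_\alpha b\to b$.

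The only real obstacle is the bookkeeping in step two: carefully verifying the two intertwining identities between the bimodule action of $(a,0)$ (resp.\ $(0,b)$) on $(A\times_T B)\hat\otimes(A\times_T B)$ and the induced $A$-bimodule (resp.\ $B$-bimodule) action on $A\hat\otimes A$ (resp.\ $B\hat\otimes B$). This is routine once one unpacks $\times_T$ on elementary tensors, but it is the substantive computation; everything else is formal.
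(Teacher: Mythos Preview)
Your argument is correct and follows essentially the same route as the paper: both use the two continuous algebra epimorphisms $r_A(a,b)=a+T(b)$ and $p_B(a,b)=b$ from $A\times_T B$ onto $A$ and $B$. The only difference is packaging---the paper invokes \cite[Proposition~2.2]{GHAH} (pseudo amenability passes to quotients under continuous epimorphisms) in one line, whereas you unfold that proposition's proof explicitly by pushing the approximate diagonal forward through $r_A\otimes r_A$ and $p_B\otimes p_B$.
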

\begin{proof}
We define $\psi :A\times _T B\rightarrow B $ by $\psi (a,b)=b$, for every $(a,b)\in A\times _T B$.
This map is a continuous epimorphism from $A\times _T B$ onto $B$, hence $B$ is pseudo amenable \cite[Proposition 2.2]{GHAH}.
Similarly,  the map $\psi :A\times _T B\rightarrow A$ defined by $\psi (a,b)=a+T(b)$
is a continuous epimorphism from $A\times _T B$ onto $A$. Hence $A$ is pseudo amenable \cite[Proposition 2.2]{GHAH}.
\end{proof}
The concept of (right) $\phi$-pseudo amenability of Banach algebra $A$ is equivalent with the existence of an approximate $\phi$-mean. (An approximate $\phi$-mean is a not necessarily bounded net $\lbrace a_{\alpha}\rbrace\subseteq A$ such that
$$\phi(a_{\alpha})\rightarrow 1\quad
\hbox{and}
\quad \Vert aa_{\alpha}-\phi(a)a_{\alpha}\Vert\rightarrow 0$$
for all $a\in A$). 
\begin{thm}
Let $A$ and $B$ be Banach algebras and let $T:B\rightarrow A$
be an algebra homomorphism  with $\Vert T\Vert\leq 1$. Then 
\begin{enumerate}
\item[(i)] $A\times_TB$ is $(\phi ,\phi\circ T)$-pseudo amenable
if and only if $A$ is $\phi$-pseudo amenable,
\item[(ii)] $A\times_TB$ is $(0,\psi)$-pseudo amenable 
if and only if $B$ is $\psi$-pseudo amenable,
\end{enumerate}
where  $\phi\in \Delta (A)$ and $\psi\in \Delta(B)$.
\begin{proof}
(i) Let $A\times_TB$ be $(\phi ,\phi\circ T)$-pseudo amenable. Then
there exists a net $\lbrace(a_{\alpha},b_{\alpha})\rbrace \subseteq A\times_TB$
such that 
\begin{equation}\label{e61}
\begin{split}
(\phi ,\phi\circ T)(a_{\alpha},b_{\alpha})=&\phi(a_{\alpha})+\phi\circ T(b_{\alpha})\rightarrow 1
\end{split}
\end{equation}
and
\begin{equation}\label{e62}
\begin{split}
\Vert ((a,b)(a_{\alpha},b_{\alpha})-(\phi ,\phi\circ T)((a,b))(a_{\alpha},b_{\alpha}))\Vert\rightarrow 0\qquad ((a,b)\in A\times _TB).
\end{split}
\end{equation}
We show that  $\lbrace a_{\alpha}+T(b_{\alpha})\rbrace$ is an approximate $\phi$-mean
for $A$. 
By  equation (\ref{e62}) we have
\begin{equation}\label{e620}
\begin{split}
\Vert ((a,b)(a_{\alpha},b_{\alpha})-&(\phi ,\phi\circ T)((a,b))(a_{\alpha},b_{\alpha}))\Vert\\& =
\Vert (aa_{\alpha}+aT(b_{\alpha})+T(b)a_{\alpha},bb_{\alpha})-(\phi(a)+\phi\circ T(b))
(a_{\alpha},b_{\alpha})\Vert
\rightarrow 0.\end{split}
\end{equation}
Substituting $b=0$ in the equation (\ref{e620}), we have
\begin{equation*}
\begin{split}
\Vert (aa_{\alpha}+aT(b_{\alpha}),0)-(\phi(a)a_{\alpha},\phi(a)b_{\alpha})\Vert
=& \Vert(aa_{\alpha}+aT(b_{\alpha})-\phi(a)a_{\alpha},-\phi(a)b_{\alpha})\Vert\rightarrow 0.
\end{split}
\end{equation*}
Hence we have
$$\Vert aa_{\alpha}+aT(b_{\alpha})-\phi(a)a_{\alpha}\Vert\rightarrow 0$$
and 
$$\Vert\phi(a)b_{\alpha}\Vert\rightarrow 0.$$
In contrast, we have
\begin{equation}\label{e63}
\begin{split}
\Vert aa_{\alpha}+aT(b_{\alpha})-\phi(a)a_{\alpha}-\phi(a)T(b_{\alpha})\Vert
\leq &\Vert aa_{\alpha}+aT(b_{\alpha})-\phi(a)a_{\alpha}\Vert +\Vert -\phi(a)T(b_{\alpha})\Vert\\
\leq & \Vert aa_{\alpha}+aT(b_{\alpha})-\phi(a)a_{\alpha}\Vert +\Vert -\phi(a)b_{\alpha}\Vert\rightarrow 0.
\end{split}
\end{equation}
The equations (\ref{e63}) and (\ref{e61}) imply that 
 $\lbrace a_{\alpha}+T(b_{\alpha})\rbrace$
is an approximate $\phi$-mean for $A$.

Conversely,  let $A$ be $\phi$-pseudo amenable. Then there exists a net
$\lbrace a_{\alpha}\rbrace\subseteq A$ such that $\phi(a_{\alpha})\rightarrow 1$
and $\Vert aa_{\alpha}-\phi(a)a_{\alpha}\Vert\rightarrow 0$ for all $a\in A$.
We show that  $\lbrace(a_{\alpha},0)\rbrace$
is an approximate $(\phi ,\phi\circ T)$-mean for $A\times _TB$.
We have $(\phi ,\phi\circ T)(a_{\alpha},0)=\phi(a_{\alpha})\rightarrow 1$.
On the other hand for every $(a,b)\in A\times _TB$ we have
\begin{equation*}
\begin{split}
\Vert (a,b)(a_{\alpha},0)-(\phi(a)+\phi\circ T(b))(a_{\alpha},0)\Vert =& 
\Vert (aa_{\alpha}+T(b)a_{\alpha},0)-(\phi(a)a_{\alpha}+\phi\circ T(b)a_{\alpha},0)\Vert\\
=&\Vert (aa_{\alpha}- \phi(a)a_{\alpha}+T(b)a_{\alpha}-\phi\circ T(b)a_{\alpha},0)\Vert\\
\leq &\Vert aa_{\alpha}- \phi(a)a_{\alpha}\Vert +\Vert T(b)a_{\alpha}-\phi\circ T(b)a_{\alpha}\Vert\rightarrow 0.
\end{split}
\end{equation*}
Hence $\lbrace(a_{\alpha},0)\rbrace\subseteq A\times _TB$ is an approximate $(\phi,\phi\circ T)$-mean
for $A\times _TB$.

(ii) Suppose that $A\times_TB$ is $(0,\psi)$-pseudo amenable. 
Then there exists a net $\lbrace(a_{\alpha},b_{\alpha})\rbrace\subseteq A\times_TB$
such that for every $(a,b)\in A\times_TB$ we have
\begin{equation}\label{e66}
\begin{split}
(0,\psi)(a_{\alpha},b_{\alpha})=&\psi(b_{\alpha})\rightarrow 1
\end{split}
\end{equation}
and
\begin{equation}\label{e65}
\begin{split}
\Vert(a,b)(a_{\alpha},b_{\alpha})-(0,\psi)((a,b))(a_{\alpha},b_{\alpha})\Vert\rightarrow 0.
\end{split}
\end{equation}
By equation (\ref{e65}) we have
\begin{equation*}
\begin{split}
\Vert(a,b)(a_{\alpha},b_{\alpha})-(0,\psi)((a,b))(a_{\alpha},b_{\alpha})\Vert =&\Vert(aa_{\alpha}+T(b)a_{\alpha}+aT(b_{\alpha}),bb_{\alpha})-(\psi(b)a_{\alpha},\psi(b)b_{\alpha})\Vert \\
=& \Vert(aa_{\alpha}+T(b)a_{\alpha}+aT(b_{\alpha})-\psi(b)a_{\alpha},bb_{\alpha}-\psi(b)b_{\alpha})\Vert\rightarrow 0.
\end{split}
\end{equation*}
Hence we have
$$\Vert aa_{\alpha}+T(b)a_{\alpha}+aT(b_{\alpha})-\psi(b)a_{\alpha}\Vert\rightarrow 0$$
and 
\begin{equation}\label{e67}
\begin{split}
\Vert bb_{\alpha}-\psi(b)b_{\alpha}\Vert\rightarrow 0.
\end{split}
\end{equation}
The equations (\ref{e66}) and (\ref{e67}) follow that $\lbrace b_{\alpha}\rbrace\subseteq B$
is an approximate $\psi$-mean for B.

Conversely, suppose that $B$ is $\psi$-pseudo amenable. Then there
exists a net $\lbrace b_{\alpha}\rbrace\subseteq B$ such that for every $b\in B$
\begin{equation}\label{e68}
\begin{split}
\psi(b_{\alpha})\rightarrow 1
\quad\hbox{and}\quad
\Vert bb_{\alpha}-\psi(b)b_{\alpha}\Vert\rightarrow 0.
\end{split}
\end{equation}
We show that the net $(-T(b_{\alpha}),b_{\alpha})\subseteq A\times_TB$
is an approximate $(0,\psi)$-mean for $A\times_TB$.
Using (\ref{e68}) yields 
\begin{equation}\label{e70}
\begin{split}
(0,\psi)(-T(b_{\alpha}),b_{\alpha})=\psi(b_{\alpha})\rightarrow 1.
\end{split}
\end{equation}
Moreover, for every $(a,b)\in A\times_TB$ we have
\begin{equation*}
\begin{split}
\Vert (a,b)(-T(b_{\alpha}),b_{\alpha})-(0,\psi)((a,b))(-T(b_{\alpha}),b_{\alpha})\Vert =&
\Vert (-aT(b_{\alpha})+aT(b_{\alpha})\\ &\,\,-T(b)T(b_{\alpha}),bb_{\alpha})-(\psi(b)(-T(b_{\alpha}),b_{\alpha}))\Vert\\
=&\Vert (-T(b)T(b_{\alpha}),bb_{\alpha})-(-\psi(b)T(b_{\alpha}),\psi(b)b_{\alpha})\Vert\\
=&\Vert (-T(b)T(b_{\alpha})+\psi(b)T(b_{\alpha}),bb_{\alpha}-\psi(b)b_{\alpha})\Vert\\
=&\Vert-T(b)T(b_{\alpha})-\psi(b)T(b_{\alpha})\Vert +\Vert bb_{\alpha}-\psi(b)b_{\alpha}\Vert.
\end{split}
\end{equation*}
The equation (\ref{e68}) follows that 
$$T(bb_{\alpha})-\psi(b)T(b_{\alpha})\rightarrow 0 .$$
Hence for every $(a,b)\in A\times_TB$ we have
\begin{equation}\label{e71}
\begin{split}
\Vert (a,b)(-T(b_{\alpha}),b_{\alpha})-(0,\psi)((a,b))(-T(b_{\alpha}),b_{\alpha})\Vert\rightarrow 0.
\end{split}
\end{equation}
The equations (\ref{e71}) and (\ref{e70}) follow that  $\lbrace(-T(b_{\alpha}),b_{\alpha})\rbrace$ is an approximate 
$(0,\psi)$-mean for $A\times_TB$.
\end{proof}
\end{thm}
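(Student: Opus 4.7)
The plan is to use the equivalent characterization of $\phi$-pseudo amenability in terms of approximate $\phi$-means stated just before the theorem. That is, I will verify the existence of a net in the target algebra (or in $A\times_T B$) satisfying the two conditions $\phi(a_\alpha)\to 1$ and $\|a a_\alpha - \phi(a) a_\alpha\|\to 0$, rather than work directly with approximate diagonals in $A\hat{\otimes}A$. This avoids any tensor-product gymnastics and reduces everything to norm estimates in the $\ell^1$-product algebra.

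For part (i), forward direction, suppose $\{(a_\alpha,b_\alpha)\}$ is an approximate $(\phi,\phi\circ T)$-mean for $A\times_T B$. I propose the candidate $\{a_\alpha + T(b_\alpha)\}\subseteq A$. The condition $\phi(a_\alpha)+\phi\circ T(b_\alpha)\to 1$ is exactly $\phi(a_\alpha + T(b_\alpha))\to 1$. For the multiplication condition, I would specialize the hypothesis to elements of the form $(a,0)\in A\times_T B$; this yields $\|(aa_\alpha, 0)+(aT(b_\alpha),0)-\phi(a)(a_\alpha,b_\alpha)\|\to 0$, whose two coordinates separately give $\|aa_\alpha + aT(b_\alpha) - \phi(a) a_\alpha\|\to 0$ and $\|\phi(a)b_\alpha\|\to 0$. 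Since $\|T\|\le 1$ the latter implies $\|\phi(a) T(b_\alpha)\|\to 0$, and a triangle-inequality combination gives $\|a(a_\alpha+T(b_\alpha)) - \phi(a)(a_\alpha+T(b_\alpha))\|\to 0$, as required. For the converse, I would take $\{(a_\alpha,0)\}$ and verify both conditions directly: $(\phi,\phi\circ T)(a_\alpha,0)=\phi(a_\alpha)\to 1$, and for $(a,b)\in A\times_T B$ the difference $(a,b)\times_T(a_\alpha,0)-(\phi(a)+\phi\circ T(b))(a_\alpha,0) = (aa_\alpha+T(b)a_\alpha - \phi(a)a_\alpha - \phi\circ T(b)a_\alpha,0)$, which is bounded by the sum of $\|aa_\alpha - \phi(a)a_\alpha\|$ and $\|T(b)a_\alpha - \phi(T(b))a_\alpha\|$, both of which tend to $0$ by hypothesis.

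For part (ii), forward direction, suppose $\{(a_\alpha,b_\alpha)\}$ is an approximate $(0,\psi)$-mean. The natural candidate is $\{b_\alpha\}\subseteq B$. The first condition is immediate from $\psi(b_\alpha)\to 1$. The second condition is extracted by looking at the $B$-component of $(0,b)\times_T(a_\alpha,b_\alpha)-(0,\psi)(0,b)\cdot(a_\alpha,b_\alpha)$, which gives precisely $\|bb_\alpha - \psi(b)b_\alpha\|\to 0$. For the converse, the substitution $(a_\alpha,0)$ would not work because $(\phi,\phi\circ T)$ is replaced by $(0,\psi)$ and we need $\psi$-information in the second coordinate; the appropriate choice, suggested by the $(0,\psi)$-character having $T$-compatibility built in, is $\{(-T(b_\alpha),b_\alpha)\}$. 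Then $(0,\psi)(-T(b_\alpha),b_\alpha)=\psi(b_\alpha)\to 1$, and computing $(a,b)\times_T(-T(b_\alpha),b_\alpha)$ gives the cancellation $-aT(b_\alpha)+aT(b_\alpha)=0$ in the first coordinate, leaving $(-T(b)T(b_\alpha),bb_\alpha)-(-\psi(b)T(b_\alpha),\psi(b)b_\alpha)$, which splits into $\|T(b)T(b_\alpha)-\psi(b)T(b_\alpha)\|$ and $\|bb_\alpha-\psi(b)b_\alpha\|$. Applying $T$ (with $\|T\|\le 1$) to the relation $bb_\alpha-\psi(b)b_\alpha\to 0$ controls the first of these, and both pieces go to zero.

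The only mildly delicate step is choosing the right candidate net in the two converse directions: the choice $(a_\alpha,0)$ for (i) works because the first component of the character $(\phi,\phi\circ T)$ already sees $A$, while for (ii) the zero first component of $(0,\psi)$ forces the compensating shift $-T(b_\alpha)$ in the first coordinate so that the cross terms involving $T$ cancel. Once those candidates are identified, every remaining estimate is a short triangle-inequality argument using $\|T\|\le 1$, and the $\ell^1$-norm splits the two coordinates cleanly, so there is no real obstacle beyond bookkeeping.
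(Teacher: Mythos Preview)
Your proposal is correct and follows essentially the same approach as the paper's proof: both use the approximate $\phi$-mean characterization, and in each of the four implications you choose exactly the same candidate net as the paper (\,$\{a_\alpha+T(b_\alpha)\}$, $\{(a_\alpha,0)\}$, $\{b_\alpha\}$, and $\{(-T(b_\alpha),b_\alpha)\}$ respectively\,), with the same substitutions $(a,0)$ and the same triangle-inequality estimates using $\|T\|\le 1$.
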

\section{Double centralizer  algebra}
In this section  we characterize double centralizer  algebra $M(A\times _T B)$ of $A\times_TB$ and as an application
we obtain a sufficient condition for approximate amenability of $A\times_TB$. 

If $A$ is a Banach algebra, then  the idealizer  $Q(A)$ of $A$ in
$(A^{**},\square)$ is defined by
$$Q(A)=\lbrace f_1\in A^{**}: x\square f_1 \ \ and\ \  f_1\square x\in A\ \  for \ \ every \ \ x\in A\rbrace,$$
see \cite{SAI} for more details.
 \begin{pro}\label{pro1}
Let $A$ and $B$ be Banach algebras and let $T:B\rightarrow A$
be an algebra homomorphism with $\Vert T\Vert\leq 1$.
Then $Q(A\times _T B)\cong Q(A)\times Q(B)$. 
\begin{proof}
By a similar method as in \cite[Theorem 3.1]{BHA}, we have $(A\times _T B)^{**}\cong A^{**}\times_{T^{**}}B^{**}$. We define a map $\phi : Q(A\times _T B)\rightarrow Q(A)\times Q(B)$ by $\phi (f_1,f_2)=(f_1+T^{**}(f_2),f_2)$
for every $ (f_1,f_2)\in Q(A\times _T B)\subseteq (A\times _T B)^{**}\cong A^{**}\times_{T^{**}}B^{**}$. Since
$(f_1,f_2)\in Q(A\times _T B)$ it follows that for every $(a,b) \in A\times _T B$, we have
$(f_1,f_2)\square (a,b)\in A\times _T B$. With a similar calculation  as in \cite[Theorem 3.1 ]{BHA}, we obtain that 
$$(f_1,f_2)\square (a,b)=(f_1\square a+T^{**}(f_2)\square a+ f_1\square T^{**}(b), f_2\square b).$$
Hence
\begin{equation}\label{2}
(f_1\square a+T^{**}(f_2)\square a+ f_1\square T^{**}(b), f_2\square b)\in A\times _T B.
\end{equation}
Substituting $b=0$ in  (\ref{2}) follows that $f_1\square a+T^{**}(f_2)\square a\in A$,
for all $a\in A$. Similarly $a\square f_1+a\square T^{**}(f_2)\in A$, for every $a\in A$. Hence
$f_1+T^{**}(f_2)\in Q(A)$. Clearly, equation (\ref{2}) shows that $f_2\square b$ and similarly $b\square f_2$ $\in B$ for all $b\in B$.
Therefore $f_2\in Q(B)$. Hence the map $\phi$ is well-defined.
Clearly the map $\phi$ is an isomorphism from $Q(A\times _T B)$ onto $Q(A)\times Q(B)$.
This completes the proof.
\end{proof}
\end{pro}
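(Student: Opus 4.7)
The plan is to identify $(A\times_T B)^{**}$ with the $T^{**}$-Lau product $A^{**}\times_{T^{**}} B^{**}$ and then decode the single idealizer condition for $A\times_T B$ as a pair of independent idealizer conditions for $A$ and $B$. Once the ambient second dual has been described explicitly, producing the candidate isomorphism is a short calculation.

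First I would establish, by an argument parallel to \cite[Theorem 3.1]{BHA}, the algebra identification $(A\times_T B)^{**}\cong A^{**}\times_{T^{**}}B^{**}$ under the first Arens product, and record the explicit formulas
$$(f_1,f_2)\square(a,b)=(f_1\square a+T^{**}(f_2)\square a+f_1\square T^{**}(b),\,f_2\square b),$$
$$(a,b)\square(f_1,f_2)=(a\square f_1+a\square T^{**}(f_2)+T^{**}(b)\square f_1,\,b\square f_2),$$
valid for $(a,b)\in A\times_T B$ and $(f_1,f_2)\in A^{**}\times_{T^{**}}B^{**}$. These two identities are the computational engine of the proof.

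Next I would define $\phi:Q(A\times_T B)\to Q(A)\times Q(B)$ by $\phi(f_1,f_2)=(f_1+T^{**}(f_2),f_2)$, with candidate inverse $\psi(g,h)=(g-T^{**}(h),h)$. The heart of the matter is well-definedness: given $(f_1,f_2)\in Q(A\times_T B)$, specialising the first formula at $b=0$ forces $f_1\square a+T^{**}(f_2)\square a\in A$ for every $a\in A$, and the mirror identity on the left forces $a\square(f_1+T^{**}(f_2))\in A$; together these say $f_1+T^{**}(f_2)\in Q(A)$. Specialising instead at $a=0$ leaves $f_2\square b\in B$ and, symmetrically, $b\square f_2\in B$, so $f_2\in Q(B)$. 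Applying the same two specialisations to $\psi(g,h)=(g-T^{**}(h),h)$ and using $g\in Q(A)$, $h\in Q(B)$ shows the pair lies in $Q(A\times_T B)$, so $\phi$ and $\psi$ are mutually inverse bounded linear bijections.

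Finally, I would verify that $\phi$ respects the Arens product on $Q(A\times_T B)$ and the coordinatewise product on $Q(A)\times Q(B)$; this reduces, via the formula above with $(a,b)$ replaced by a second element $(g_1,g_2)$ of the idealizer, to the identity
$$(f_1+T^{**}(f_2))\square(g_1+T^{**}(g_2))=f_1\square g_1+T^{**}(f_2)\square g_1+f_1\square T^{**}(g_2)+T^{**}(f_2\square g_2),$$
which is immediate from $T^{**}$ being multiplicative with respect to $\square$. The step I expect to be the real obstacle is the preliminary second-dual identification: one must check that the first Arens product on $(A\times_T B)^{**}$ genuinely takes the $T^{**}$-Lau shape displayed above, which relies crucially on $T^{**}:B^{**}\to A^{**}$ being an algebra homomorphism and on the $\ell^1$-type norm passing to the dual in the expected way. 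Once that structural fact is in hand, everything else is a substitution.
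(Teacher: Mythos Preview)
Your proposal is correct and follows exactly the paper's approach: the same identification $(A\times_T B)^{**}\cong A^{**}\times_{T^{**}}B^{**}$, the same map $\phi(f_1,f_2)=(f_1+T^{**}(f_2),f_2)$, and the same specialisation $b=0$ to extract $f_1+T^{**}(f_2)\in Q(A)$. In fact you are more thorough than the paper, which simply asserts ``Clearly the map $\phi$ is an isomorphism'' without writing down the inverse $\psi$ or checking multiplicativity, whereas you supply both.
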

Recall that
$(A\times _T B)^{*}\cong (A^{*}\times B^{*}) $ \cite[Theorem 1.10.13]{MEG} and with a similar argument as in \cite[theorem 3.1]{BHA}, we have $(A\times _T B)^{**}\cong A^{**}\times_{T^{**}}B^{**}$. Hence we have $(A\times _T B)^{***}\cong A^{***}\times B^{***}$.
\begin{thm}
Let $A$ and $B$ be Banach algebras and let $T:B\rightarrow A$ be
an algebra homomorphism with $\Vert T\Vert\leq 1$.
If $A\times _T B$ has weakly
approximate identity
 $\lbrace (e_\alpha ,\eta _ \alpha)\rbrace$ such that $w-\lim (f,g)\square (e_\alpha ,\eta _ \alpha)=(f,g)$
for all $(f,g)\in (A\times _T B)^{**}$, then $M(A\times _T B)\cong M(A)\times M(B)$.
\end{thm}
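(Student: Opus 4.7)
The approach is a three-step reduction to Proposition~\ref{pro1}. The bridge is the standard criterion (see \cite{SAI}): if a Banach algebra $C$ carries a weakly approximate identity $\{u_\alpha\}$ with $w-\lim h\square u_\alpha=h$ for every $h\in C^{**}$, then the natural assignment $h\mapsto (L_h,R_h)$, where $L_h(c)=h\square c$ and $R_h(c)=c\square h$, is an isometric algebra isomorphism $Q(C)\cong M(C)$. I would apply this to each of $C\in\{A,\,B,\,A\times_T B\}$ and then invoke Proposition~\ref{pro1}.

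First I would transfer the hypothesis to each factor. Lemma~\ref{lem3.1}(i) directly yields that $\{e_\alpha+T(\eta_\alpha)\}$ and $\{\eta_\alpha\}$ are weakly approximate identities for $A$ and $B$, respectively. To obtain the stronger second-dual condition, I use the identification $(A\times_T B)^{**}\cong A^{**}\times_{T^{**}}B^{**}$ recorded just before the theorem. For $f\in A^{**}$, inserting $(f,0)$ into the hypothesis and expanding the $\times_{T^{**}}$-product gives
\[
(f,0)\,\square\,(e_\alpha,\eta_\alpha)=\bigl(f\,\square\, e_\alpha+f\,\square\, T^{**}(\eta_\alpha),\,0\bigr),
\]
so $w-\lim f\,\square\,(e_\alpha+T(\eta_\alpha))=f$ in $A^{**}$. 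Symmetrically, inserting $(0,g)$ with $g\in B^{**}$ and reading off the second coordinate yields $w-\lim g\,\square\,\eta_\alpha=g$ in $B^{**}$ (the first coordinate becomes an auxiliary condition $T^{**}(g)\,\square\, e_\alpha\to 0$ weakly, which is automatic from the component-wise convergence).

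Applying the $Q\cong M$ criterion three times then gives $M(A)\cong Q(A)$, $M(B)\cong Q(B)$, and $M(A\times_T B)\cong Q(A\times_T B)$. Composing with Proposition~\ref{pro1} produces
\[
M(A\times_T B)\cong Q(A\times_T B)\cong Q(A)\times Q(B)\cong M(A)\times M(B);
\]
tracking the twisted isomorphism $\phi(f_1,f_2)=(f_1+T^{**}(f_2),f_2)$ from Proposition~\ref{pro1} through the $\times_{T^{**}}$-multiplication confirms that the composite is multiplicative for the direct-product algebra structure on the right-hand side, so the final map is an algebra isomorphism. The main obstacle is the transfer step: verifying the stronger second-dual limit condition on each factor, so that the $Q\cong M$ criterion is legitimately applicable to each of the three algebras. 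Once that is in hand, the rest is a diagram chase through Proposition~\ref{pro1}.
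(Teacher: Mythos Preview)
Your proposal is correct and follows essentially the same route as the paper: transfer the weak approximate identity and the second-dual limit condition to each factor via Lemma~\ref{lem3.1} and the identification $(A\times_T B)^{**}\cong A^{**}\times_{T^{**}}B^{**}$, apply Saito's criterion \cite[Corollary~2]{SAI} to obtain $Q\cong M$ for each of $A$, $B$, and $A\times_T B$, and then invoke Proposition~\ref{pro1}. The only cosmetic difference is that the paper extracts the factor conditions by testing against an arbitrary $(f^{***},g^{***})$ and then setting $g=0$ or $f^{***}=0$, whereas you insert $(f,0)$ and $(0,g)$ directly; these are equivalent manoeuvres.
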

\begin{proof}
	Suppose that the hypothesis holds.
Then using  \cite[Corollary 2]{SAI}, it implies that $Q(A\times _TB)\cong M(A\times _TB).$ 
Since $\lbrace (e_\alpha ,\eta _ \alpha)\rbrace$ is weakly
 approximate identity for $A\times _T B$, by Lemma \ref{lem3.1},
$\lbrace e_\alpha +T(\eta _ \alpha)\rbrace$
and $\lbrace \eta _{\alpha}\rbrace$ are weakly approximate identities for $A$ and
$B$, respectively. 

 Since $w-\lim (f,g)\square (e_\alpha ,\eta _ \alpha)=(f,g)$ for all $(f,g)\in (A\times _T B)^{**} $,
it follows that $(f^{***},g^{***})((f,g)\square (e_\alpha ,\eta _ \alpha))\rightarrow (f^{***},g^{***})(f,g)$
 for every $(f^{***},g^{***})\in (A\times _T B)^{***} $.
Hence for every $(f^{***},g^{***})\in  A^{***}\times B^{***} $
and $(f,g)\in  A^{**}\times_{T^{**}}B^{**}$ we have
\begin{equation}\label{3}
(f^{***},g^{***})(f\square e_{\alpha}+ f\square T(\eta _{\alpha})
+T(g)\square e_\alpha ,g\square \eta _{\alpha})\rightarrow
f^{***}(f)+g^{***}(g).
\end{equation}
Substituting $g=0$ in (\ref{3}), we obtain 
$f^{***}(f\square e_\alpha+f\square T(\eta _{\alpha}))\rightarrow f^{***}(f)$
for every $f^{***}\in A^{***}$ and $f\in A^{**}$.
 It follows that $w-\lim f\square (e_\alpha +T(\eta _ \alpha))=f $
for every $f\in A^{**}$. Using this fact and since $\lbrace e_\alpha +T(\eta _ \alpha)\rbrace$ is weakly approximate identity for $A$, 
 by \cite[Corollary 2]{SAI} we have $M(A)\cong Q(A)$. 
 
 Substituting $f^{***}=0$ in (\ref{3}),
 it follows that $g^{***}(g\square \eta _{\alpha})\rightarrow g^{***}(g)$
for every $g^{***}\in B^{***}$ and $g\in B^{**}$. 
Hence we have $w-\lim g\square \eta _ \alpha=g$
for every $g\in B^{**}$. Again using this fact and since $\lbrace \eta _{\alpha}\rbrace$ is weakly 
approximate identity for $B$,
 by \cite[Corollary 2]{SAI}, we have $M(B)\cong Q(B)$.
Combining these facts and Proposition \ref{pro1} yields
$$ M(A\times _TB) \cong Q(A\times _T B)\cong Q(A)\times Q(B)\cong M(A)\times M(B).$$
\end {proof}

\begin{thm}\label{thm4.4}
Let $A$ and $B$ be Banach algebras and let $T:B\rightarrow A$ be an 
algebra homomorphism with $\Vert T\Vert\leq 1$. If $A\times _TB$ has weakly bounded 
approximate identity, then $M(A\times _TB)\cong M(A)\times M(B)$.
\begin{proof}
Since   $A\times _TB$ has weakly bounded 
approximate identity,	\cite[Theorem 2]{SAI} shows that the map $\phi: Q(A\times _TB)\rightarrow M(A\times _TB)$
defined by $\phi(f_1,f_2)=(L_{(f_1,f_2)},R_{(f_1,f_2)})$ for every 
$(f_1,f_2)\in Q(A\times _TB)$ is onto.
 Set $$K_{A\times _TB}=\lbrace (f_1,f_2)\in A^{**}\times_{T^{**}}B^{**}: (g_1,g_2)\square (f_1,f_2)=0,\quad \forall(g_1,g_2)\in ( A^{**}\times_{T^{**}}B^{**})\rbrace.$$
By \cite[Theorem 1]{SAI} $\ker\phi=K_{A\times _TB}\cap Q(A\times _TB)$.
This implies that
\begin{equation}\label{e41}
\begin{split}
 \frac{Q(A\times _TB)}{K_{A\times _TB}\cap Q(A\times _TB)}\cong & M(A\times _TB),
\end{split}
\end{equation}
by a similar argument, we have 
\begin{equation}\label{e42}
\begin{split}
\frac{Q(A)}{K_A\cap Q(A)}\cong M(A)
\end{split}
\end{equation}
and 
\begin{equation}\label{e43}
\begin{split}
\frac{Q(B)}{K_B\cap Q(B)}\cong M(B),
\end{split}
\end{equation}
where $\cong$ denotes the algebra isomorphism.

Now we define $\psi:K_{A\times _TB}\rightarrow K_A\times K_B$
by $\psi(f_1,f_2)=(f_1+T^{**}(f_2),f_2)$. Clearly $\psi$ is an algebra isomorphism. 
Hence we have $K_{A\times _TB}\cong K_A\times K_B$
and by Proposition \ref{pro1} we have 
\begin{equation}\label{e44}
\begin{split}
Q(A\times _TB)\cong & Q(A)\times Q(B). 
\end{split}
\end{equation}
Therefore
\begin{equation}\label{e45}
\begin{split}
K_{A\times _TB}\cap Q(A\times _TB)\cong & (K_A \times K_B)\cap (Q(A)\times Q(B))\\
=&(K_A\cap Q(A))\times (K_B\cap Q(B)).
\end{split}
\end{equation}
The equations (\ref{e44}) and (\ref{e45}) imply  that
\begin{equation}\label{e46}
\begin{split}
\frac{Q(A\times _TB)}{K_{A\times _TB}\cap Q(A\times _TB)}\cong & \frac{Q(A)\times Q(B)}{(K_A\cap Q(A))\times (K_B\cap Q(B))}\\
\cong & \frac{Q(A)}{K_A\cap Q(A)}\times \frac{Q(B)}{K_B\cap Q(B)}.
\end{split}
\end{equation}
Combining  (\ref{e41}), (\ref{e42}) and (\ref{e43}) yields
$M(A\times _TB)\cong M(A)\times M(B)$.
\end{proof} 
\end{thm}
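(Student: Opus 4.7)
The plan is to imitate Saito's description of the multiplier algebra of a single Banach algebra possessing a weakly bounded approximate identity as a quotient $Q/(K\cap Q)$, and then combine it with the structural decomposition $Q(A\times_T B)\cong Q(A)\times Q(B)$ already recorded in Proposition \ref{pro1}.

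First I would invoke Saito's result from \cite{SAI}: for any Banach algebra $C$ with a weakly bounded approximate identity, the canonical map $\phi_C:Q(C)\to M(C)$, $f\mapsto (L_f,R_f)$, is surjective with kernel $K_C\cap Q(C)$, where $K_C=\{f\in C^{**}: g\square f=0\text{ for all }g\in C^{**}\}$. I apply this directly to $C=A\times_T B$. To apply it to $A$ and $B$ I first need weakly bounded approximate identities in each algebra; these are supplied by Lemma \ref{lem3.1}(i) from the given bounded weak approximate identity $\{(e_\alpha,\eta_\alpha)\}$ of $A\times_T B$, namely $\{e_\alpha+T(\eta_\alpha)\}\subseteq A$ and $\{\eta_\alpha\}\subseteq B$, both of which stay bounded because $\|T\|\leq 1$. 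This produces three quotient representations $M(A\times_T B)\cong Q(A\times_T B)/(K_{A\times_T B}\cap Q(A\times_T B))$, $M(A)\cong Q(A)/(K_A\cap Q(A))$, and $M(B)\cong Q(B)/(K_B\cap Q(B))$.

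The main substantive step is to define a candidate map $\psi: K_{A\times_T B}\to K_A\times K_B$ by $\psi(f_1,f_2)=(f_1+T^{**}(f_2),f_2)$ and verify it is an algebra isomorphism. Using the identification $(A\times_T B)^{**}\cong A^{**}\times_{T^{**}} B^{**}$ (as in \cite[Theorem 3.1]{BHA}), the condition $(g_1,g_2)\square(f_1,f_2)=0$ for all $(g_1,g_2)$ should translate, by substituting $g_2=0$ and then $g_1=0$, into the pair of conditions $g\square(f_1+T^{**}(f_2))=0$ for all $g\in A^{**}$ and $h\square f_2=0$ for all $h\in B^{**}$, which is precisely the membership condition for $K_A\times K_B$. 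Bijectivity and the homomorphism property then follow by direct computation with the $T^{**}$-twisted multiplication; the inverse is $(u,v)\mapsto (u-T^{**}(v),v)$. Combining $\psi$ with Proposition \ref{pro1} yields $K_{A\times_T B}\cap Q(A\times_T B)\cong (K_A\cap Q(A))\times (K_B\cap Q(B))$, so the big quotient splits as a direct product of the smaller ones, giving $M(A\times_T B)\cong M(A)\times M(B)$.

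The main obstacle I anticipate lies in the well-definedness and surjectivity of $\psi$: because the $T^{**}$-twisted product on $A^{**}\times_{T^{**}} B^{**}$ mixes the two coordinates nontrivially, one must check carefully that the vanishing of the full twisted product against all elements of the product really does isolate the two separate scalar vanishing conditions on $f_1+T^{**}(f_2)$ and on $f_2$, refining the substitution argument already used in Proposition \ref{pro1}. Once this algebraic identification is in place the rest is a formal assembly of the quotient isomorphisms.
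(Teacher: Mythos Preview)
Your proposal is correct and follows essentially the same route as the paper: both arguments pass through Saito's quotient description $M(C)\cong Q(C)/(K_C\cap Q(C))$, identify $K_{A\times_T B}$ with $K_A\times K_B$ via the map $(f_1,f_2)\mapsto (f_1+T^{**}(f_2),f_2)$, invoke Proposition~\ref{pro1} for the $Q$-decomposition, and assemble the three quotient isomorphisms. In fact you are slightly more careful than the paper on one point: you explicitly appeal to Lemma~\ref{lem3.1}(i) to produce weakly bounded approximate identities in $A$ and $B$ (needed to apply Saito's theorem to them), whereas the paper just writes ``by a similar argument''.
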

Now as an application of the previous theorem  we give a version of  inverse of  Theorem \ref{t3-2}, which is not valid in general case.

\begin{thm}
Let $A$ and $B$ be Banach algebras and
let $T:B\rightarrow A$ be an algebra homomorphism with $\Vert T\Vert\leq 1$.
Suppose that $A$ is approximately amenable, $A$ has a bounded approximate identity and $B$ is amenable. Then $A\times_TB$
is approximately amenable.
\end{thm}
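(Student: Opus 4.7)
The plan is to realize $A\times_T B$ as a Banach-algebra extension of $B$ by the ideal $A\times\{0\}$ and then invoke a standard lifting theorem for approximate amenability. Two structural calculations form the backbone. First, $(a,b)\times_T(a',0)=(aa'+T(b)a',\,0)$ and $(a',0)\times_T(a,b)=(a'a+a'T(b),\,0)$ show that $A\times\{0\}$ is a closed two-sided ideal of $A\times_T B$, and $(a_1,0)\times_T(a_2,0)=(a_1a_2,0)$ shows that the induced Banach-algebra structure on this ideal is exactly that of $A$. In particular the ideal is approximately amenable and inherits the given bounded approximate identity of $A$. Second, the quotient product simplifies to $[(a_1,b_1)][(a_2,b_2)]=[(0,b_1b_2)]$ and the quotient norm to $\|[(a,b)]\|=\|b\|$, so $(A\times_T B)/(A\times\{0\})$ is isometrically isomorphic to the amenable Banach algebra $B$.

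I would then invoke the Ghahramani--Loy extension theorem (cf.\ \cite{GHAHR}): if $I$ is a closed two-sided ideal of a Banach algebra $\mathcal{A}$ such that $I$ has a bounded approximate identity and is approximately amenable while $\mathcal{A}/I$ is amenable, then $\mathcal{A}$ is approximately amenable. Applied with $\mathcal{A}=A\times_T B$ and $I=A\times\{0\}$, this concludes the argument.

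To link the proof with Theorem~\ref{thm4.4}, observe that amenability of $B$ supplies a bounded approximate identity $\{\eta_\beta\}$, which combined with $\{e_\alpha\}$ in $A$ yields, by the norm analogue of Lemma~\ref{lem3.1}(ii), the bounded approximate identity $\{(e_\alpha-T(\eta_\beta),\eta_\beta)\}$ for $A\times_T B$ itself. Theorem~\ref{thm4.4} is therefore applicable and delivers $M(A\times_T B)\cong M(A)\times M(B)$, which is the multiplier-algebra incarnation of the extension structure exploited above; this is the sense in which the present theorem can be framed as an application of Section~4.

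The principal obstacle is to pinpoint and verify the precise lifting statement in the literature: one needs the asymmetric variant in which the ideal is only approximately amenable (not necessarily amenable) while the quotient is amenable, which is less classical than the symmetric assumption on both pieces. A secondary subtlety is to check that the $A$-bimodule structure on $I$ and the $B$-bimodule structure on $\mathcal{A}/I$ inherited from $A\times_T B$ coincide with the natural ones, so that the lifting theorem is genuinely applicable; this however is immediate from the multiplication formulas displayed above.
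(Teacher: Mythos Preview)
Your structural observations are correct: $A\times\{0\}$ is a closed two-sided ideal of $A\times_T B$ isometrically isomorphic to $A$, and the quotient is isometrically isomorphic to $B$. This is, however, a genuinely different route from the paper. The paper argues directly with derivations: it uses the bounded approximate identity of $A\times_T B$ to reduce to neo-unital bimodules (\cite[Proposition~1.8]{JOH}), invokes Theorem~\ref{thm4.4} to identify $M(A\times_T B)\cong M(A)\times M(B)$, extends a given derivation $D$ to the multiplier algebra (\cite[Proposition~1.9]{JOH}), first trivializes its restriction to $\{1\}\times B$ by amenability of $B$, and then trivializes the residual derivation on $A\times\{1\}$ (whose range lands in $\mathcal{Z}^0(\{1\}\times B,X^*)$) by approximate contractibility of $A$ (\cite[Theorem~2.1]{GHAHRA}). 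In effect the paper \emph{proves by hand}, in this concrete setting, precisely the extension principle you wish to quote, and Theorem~\ref{thm4.4} is genuinely load-bearing there rather than the post-hoc observation it becomes in your write-up.

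The gap you yourself flag is real and is not merely bibliographic. Your citation \cite{GHAHR} is Ghahramani--Read on approximate identities, not an extension theorem; the relevant extension results are in Ghahramani--Loy (2004) and \cite{GHAHRA}, but the version recorded there has the \emph{opposite} asymmetry---the ideal amenable and the quotient approximately amenable. A statement with $I$ only approximately amenable (with bounded approximate identity) and $\mathcal{A}/I$ amenable does not appear explicitly in those sources, and establishing it requires exactly the derivation-splitting argument the paper carries out. So while your ideal/quotient decomposition is the right skeleton, the proposal as written defers the substantive work to a black box that, as far as the standard references go, you would still have to supply yourself.
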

\begin{proof}
Let $\lbrace e_{\alpha}; \alpha \in I\rbrace$ and $\lbrace\eta_{\beta}; \beta\in J\rbrace$
be bounded approximate identities for $A$ and $B$, respectively. By Lemma \ref{lem3.1}, 
$\lbrace(e_{\alpha}-T(\eta_{\beta}),\eta_{\beta})\rbrace$ is a bounded approximate identity for $A\times_TB$.
Hence by \cite[Proposition 1.8]{JOH}  $A\times_TB$ is approximate
amenable if and only if every bounded derivation from $A\times_TB$ into $X^*$ is approximate inner, where $X$ 
is a neo-unital Banach $A\times_TB$-bimodule.

Since $M(A\times_TB)\cong M(A)\times M(B),$ we can  define two maps $a\mapsto a\times \lbrace 1\rbrace$ and $b\mapsto \lbrace 1\rbrace\times b$, which are 
homomorphisms of $A$ and $B$ into two commuting subsets  of $M(A\times_TB)$,
 which are denoted
 by $A\times \lbrace 1\rbrace$ and $\lbrace 1\rbrace\times B$,
respectively.  With a similar argument as in \cite[Proposition 1.9]{JOH} we see that  $X$ is a Banach $M(A\times_TB)$-bimodule, so  it is an Banach $A$-bimodule and 
a Banach $B$-bimodule. Let $D\in \mathcal{Z}^1(A\times_TB,X^*)$. Then $D$ extends to an element of  
$\mathcal{Z}^1(M(A\times_TB),X^*)$  still denoted by $D$  and this gives an element $D_1$ of
$\mathcal{Z}^1(B,X^*)$ by restriction to $\lbrace 1\rbrace\times B$. Since $B$ is amenable, there 
exists $ y_0\in X^*$ such that $D_1(b)= ad_{ y_0}(b)= y_0\cdot b-b\cdot y_0$,
for every $b\in B$. Now let 
$\tilde D=D- ad_{ y_0}\in \mathcal{Z}^1(M(A\times_TB),X^*) $,
we have 
$\tilde D(1,b)=0$
for every $b\in B$, hence for every $a\in A$ and $b\in B$ we have
$$(1,b)\cdot \tilde D(a,1)=\tilde D(a,b)=\tilde D(a,1)\cdot (1,b),$$
which shows that  the range of $\tilde D\vert_{A\times \lbrace 1\rbrace}$
is in $\mathcal{Z}^0(\lbrace 1\rbrace\times B,X^*)=\lbrace x^*\in X^*: x^*\cdot (1,b)=(1,b)\cdot x^*\quad (b\in B)\rbrace$.
 Since $A$ is approximately amenable, $A$ is approximately contractible \cite[Theorem 2.1]{GHAHRA}. Hence there exists $\lbrace z_{\alpha}\rbrace\subseteq \mathcal{Z}^0(\lbrace 1\rbrace\times B, X^*)$
such that
$$\tilde D(a,1)=\lim z_{\alpha}\cdot (a,1)-(a,1)\cdot z_{\alpha}.$$
Since $\lbrace z_{\alpha}\rbrace\subseteq \mathcal{Z}^0(\lbrace 1\rbrace\times B, X^*)$,
we have $\tilde D=0$ on $\lbrace 1\rbrace\times B$.
Now we define derivation $\tilde{\tilde D}\in \mathcal{Z}^1(M(A\times_TB), X^*)$ by
 $\tilde{\tilde D}(a,b)=\tilde D(a,b)-(\lim z_{\alpha}\cdot (a,b)-(a,b)\cdot z_{\alpha})$ 
for every $(a,b)\in M(A\times_TB)\cong M(A)\times M(B)$.
we have $\tilde{\tilde D}=0$ on $(A\times \lbrace 1\rbrace)\cup(\lbrace 1\rbrace \times B)$.
Since $A\times_TB\hookrightarrow M(A\times_TB)\cong M(A)\times M(B)$,
for every $(a,b)\in A\times_TB$
$$\tilde{\tilde D}(a,b)=\tilde{\tilde D}((a,1)(1,b))=(a,1)\cdot\tilde{\tilde D}(1,b)+\tilde{\tilde D}(a,1)\cdot (1,b)=0,$$
that is, $\tilde D(a,b)-(\lim z_{\alpha}\cdot (a,b)-(a,b)\cdot z_{\alpha})=0$ for every $(a,b)\in A\times_TB$.

Hence 
\begin{equation*}
\begin{split}
D\vert_{A\times_TB}(a,b)=&\lim z_{\alpha}\cdot (a,b)-(a,b)\cdot z_{\alpha}+y_0\cdot (a,b)-(a,b)\cdot y_0\\
=&\lim (z_{\alpha}+y_0)\cdot (a,b)-(a,b)\cdot (z_{\alpha}+y_0).
\end{split}
\end{equation*}
Therefore $D\vert_{A\times_TB}$ is approximately inner. This completes the proof. 

\end{proof}

\end{document}